\numberwithin{equation}{section}
\theoremstyle{plain}   
 \newtheorem{thm}[equation]{Theorem}
 \newtheorem{cor}[equation]{Corollary}
 \newtheorem{lem}[equation]{Lemma}
 \newtheorem{prop}[equation]{Proposition}
\newtheorem*{thm*}{Theorem}
\newtheorem*{cor*}{Corollary}
\newtheorem*{lem*}{Lemma}
\newtheorem*{prop*}{Proposition}
\newtheorem*{claim*}{Claim}
\theoremstyle{definition}
\newtheorem{defn}[equation]{Definition}
\newtheorem*{defn*}{Definition}
\newtheorem{cons}[equation]{Construction}
\theoremstyle{remark}
\newtheorem{rmk}[equation]{Remark}
\newtheorem{ex}[equation]{Example}
\newtheorem*{rmk*}{Remark}
\newtheorem*{ex*}{Example}
\newtheorem*{exs*}{Examples}
\newcommand{\cx}{\mathbb{C}}
\newcommand{\A}{\mathcal{A}}
\newcommand{\BA}{\mathbb{A}}
\newcommand{\BE}{\mathbb{E}}
\newcommand{\R}{\mathcal{R}}
\newcommand{\K}{\mathcal{K}}
\newcommand{\BK}{\mathbb{K}}
\newcommand{\F}{\mathcal{F}}
\newcommand{\G}{\mathcal{G}}
\newcommand{\fH}{\mathcal{H}}
\newcommand{\fM}{\mathcal{M}}
\newcommand{\bN}{\mathbf{N}}
\newcommand{\C}{\mathcal{M}}
\newcommand{\D}{\mathcal{N}}
\newcommand{\M}[1]{M_{#1}(\R)}
\newcommand{\GL}[1]{GL_{#1}(\R)}
\newcommand{\GLH}[1]{GL_{#1}(\BE(\R))}
\newcommand{\Mod}{\fM(\R)}
\newcommand{\gC}[1]{\widehat{\C}(\un{#1})}
\newcommand{\ModH}{\fM(\BE(\R))}
\newcommand{\gD}[1]{\widetilde{\D}(\un{#1})}
\newcommand{\n}[1]{\textbf{#1}}
\newcommand{\id}[1]{\text{id}_{#1}}
\newcommand{\I}[1]{I_{#1}}
\newcommand{\Id}{\text{id}}
\newcommand{\bp}{\boxplus}
\newcommand{\op}{\oplus}
\newcommand{\Bin}[1]{\bigboxplus _{i\in {#1}}}
\newcommand{\ob}[2]{\{ {#1}_S,{#2}_{S,T}\} }
\newcommand{\Hom}[2]{\underline{Hom}(\Delta ^1,\D({#1},{#2}))}
\newcommand{\Homdd}[4]{\underline{Hom}(\D({#1},{#2}),\D({#3},{#4}))}
\newcommand{\on}[1] {\operatorname{#1}}
\newcommand{\st} {S \cup  T}
\newcommand{\stu} {S \cup  T \cup  U}
\newcommand{\tu} {T \cup  U}
\newcommand{\bst} {B_S \! \op \! B_T}
\newcommand{\bstu} {B_S \! \op\!  B_T\! \op \! B_U}
\newcommand{\btu} {B_T \! \op \! B_U}
\newcommand{\cst} {C_S \! \op \! C_T}
\newcommand{\fat}[1]{\widetilde{\D}(\underline{#1})}
\newcommand{\thin}[1]{\widehat{\D}(\underline{#1})}
\newcommand{\un}[1]{\underline{#1}}
\begin{document}
\title{Spectra associated to symmetric monoidal bicategories}

\author{Ang\'{e}lica M. Osorno}

\begin{abstract}
We show how to construct a $\Gamma$-bicategory from a symmetric monoidal bicategory, and use that to show that the classifying space is an infinite loop space upon group completion. We also show a way to relate this construction to the classic $\Gamma$-category construction for a bipermutative category. As an example, we use this machinery to construct a delooping of the $\K$-theory of a bimonoidal category as defined in \cite{BDR}.
\end{abstract}

\maketitle


\section{Introduction}

Symmetric monoidal bicategories appear in many contexts in mathematics. Some examples include $Bimod$, the bicategory of rings, bimodules and homomorphisms of bimodules, with tensor product as the monoidal structure; and $nCob$, the bicategory of closed $n$-manifolds, cobordisms and diffeomorphisms between cobordisms, with disjoint union as the monoidal structure. The definition of symmetric monoidal bicategory is cumbersome. The most concise definition can be found in \cite{schommer}. Shulman \cite{shulman} shows how to obtain symmetric monoidal bicategories from symmetric monoidal double categories, which are easier to understand.

In view of the importance of the construction of spectra from symmetric monoidal categories \cite{mayperm, segal}, we can ask then if there is a similar construction for symmetric monoidal bicategories. More specifically, we would like to know if the group completion of the classifying space of a symmetric monoidal bicategory is an infinite loop space. In this paper we show that this is the case for \emph{strict} symmetric monoidal bicategories. To do this, we use the theory of Segal's $\Gamma$-spaces \cite{segal}. This procedure gives a functor from symmetric monoidal bicategories to spectra that we call $\BA$.

Given a bimonoidal category $\R$, we construct the bicategory of $\R$-modules, $\Mod$. It turns out that the $\K$-theory of $\R$, as defined in \cite{BDR} is the group completion of the classifying space of $\Mod$. We prove that $\Mod$ is a strict symmetric monoidal bicategory, and hence, $\K(\R)=\BA _0 (\Mod)$ is an infinite loop space. We compare this result with those of \cite{BDRRst}, where the authors obtain an equivalence of spaces
\begin{equation}\label{equivk}
\K (\R) \overset{\sim}\longrightarrow K(\BE(\R)),
\end{equation}
where the right-hand side is the zeroth space of the algebraic $K$-theory spectrum of the ring spectrum $\BE(\R)$. We prove that this equivalence is an equivalence of infinite loop spaces.

\subsection{Organization}

Section \ref{back} contains the necessary background information on bicategories and Segal's $\Gamma$-spaces. We state the main results in Section \ref{roadmap}. Section \ref{algkthy} is dedicated to the application of the main results to the $K$-theory of bimonoidal categories. The remaining sections contain the main proofs and constructions. Appendix \ref{classifying} gives an account of the construction of the classifying space of a bicategory.

\subsection{Notation}

We denote the functor from the category of (small) permutative categories to the category of spectra by $\BE$. Let $R$ be a ring spectrum. Let $\C (R)$ be the category of finitely generated free modules over $R$. Then the $K$-theory spectrum of $R$ is defined as $\BK(R)=\BE(\C(R))$. The $K$-theory space is the zeroth space of this spectrum and it is denoted by $K(R)$.




\subsection{Warning!}
Both monoidal bicategories and bimonoidal categories appear in this paper. We hope that the similarity of these terms leads to no confusion.


\subsection{Acknowledgements}

The results from this paper are part of the author's Ph.D. thesis under the supervision of Mark Behrens. We would like to thank Peter May for suggesting the current organization and for his comments on earlier versions of this paper.

\section{Background}\label{back}

\subsection{Bicategories}
In this paper we will be working with bicategories. For definitions and proofs of the basic theorems, we refer the reader to  the earlier papers on bicategories \cite{benabou, streetfib} and some more recent accounts \cite{leinster, schommer}.

Throughout the document we will assume categories and bicategories are enriched over simplicial sets without explicitly saying it. A bicategory is enriched over simplicial sets if all the categories of morphisms are enriched. Whenever we work with a non-enriched category we will indicate it explicitly.

Given a bicategory $\mathcal{C}$, we will denote the vertical composition by $\circ$, the horizontal composition by $\ast$. For an object $A$ in $\mathcal{C}$, $I_A$ denotes the identity 1-morphism. The associativity isomorphism is denoted by $\alpha$, while the right and left identity isomorphisms are denoted by $r$ and $l$, respectively.

Let $F: \mathcal{C}\rightarrow \mathcal{D}$ be a pseudofunctor. We denote the functoriality 2-isomorphisms by
$$F^2_{f,g}: F(g)\ast F(f)\rightarrow F(g\ast f) \quad\text{and}\quad F^0_A :\I{FA}\rightarrow F(\I{A}).$$
These are subject to coherence axioms as stated in \cite[Section 1.1]{leinster}.

Given a transformation $\eta : F \rightarrow G$ between pseudofunctors we denote the 1-morphism by $\eta _A: FA \rightarrow GA$ and the naturality 2-isomorphism by $\eta ^2 _f : Gf\ast \eta _A \rightarrow \eta _B \ast Ff$.

We will make extensive use of pasting diagrams for bicategories. A pasting diagram is a polygonal arrangement on the plane, where the vertices correspond to objects, the directed edges correspond to 1-morphisms and the faces are usually filled with double arrows corresponding to 2-morphisms. For example, the diagram
$$\xymatrixrowsep{.5pc}\xymatrix{
& \ar[ddr]^g &\\
\rrtwocell<\omit>{\sigma}& &\\
\ar[uur]^f \ar[rr]_h &&
}$$
indicates that $\sigma$ is a 2-morphism from $g\ast f$ to $h$.

We can combine pasting diagrams to depict certain compositions of 2-morphisms. For example the diagram
$$\xymatrixrowsep{.5pc}\xymatrix{
\ar[rr]^f \ar[rdd]_g && \ar[rdd]^k &\\
\rrtwocell<\omit>{\varphi}& \rrtwocell<\omit>{\psi}&&\\
& \ar[uur]_h \ar[rr]_l &&
}
$$
represents the 2-morphism given by the composition
$$k\ast f \overset{k\ast \varphi}\Longrightarrow k\ast (h\ast g)
\overset{\alpha}\Longrightarrow (k\ast h) \ast g \overset{\psi\ast
g}\Longrightarrow l\ast g.$$

We note here that the 2-morphisms are not actually composable; we need to use the associativity isomorphism. In general, for larger diagrams, the source and target of the 2-morphisms we are composing may differ by their bracketing. By the Coherence Theorem of Bicategories \cite{MP} we know that there is a unique canonical associativity isomorphism between two bracketings, so we use this isomorphisms to connect the source and target of the 2-morphisms we are composing and hence make sense of the diagram.

Once we specify a bracketing of the outside 1-morphisms, the diagram has a unique meaning, no matter what order we use to compose the 2-morphisms. We refer the reader to \cite{KS}.

When we say ``pasting diagram $A$ is equal to pasting diagram $B$'' we mean that with a given bracketing of the outside 1-morphisms, the given 2-morphisms that they both define are equal. Note that if this is true for a given bracketing, it is true for all bracketings.

\subsection{$\Gamma$-spaces}\label{gammaspaces}

Segal's $\Gamma$-spaces give an infinite loop space machine. In \cite{segal}, it is shown that a symmetric monoidal category $\C$ gives rise to a $\Gamma$-space, and hence a cohomology theory. We will recall briefly what a $\Gamma$-space is, since the definition will play a central role in the paper.

Let $Fin_{\ast}$ denote that (skeletal) category of finite pointed sets and pointed maps. The skeletal version has as objects the sets $\underline{n}=\{0,1,\dots, n\}$, for $n\geq 0$. Here $0$ is the basepoint.

For $1\leq k\leq n$, we define $i_k:\underline{n}\rightarrow \underline{1}$ as:
$$i_k(j)=\begin{cases}
          0 & \text{if}\quad j\neq k\\
    1       & \text{if}\quad j= k.
         \end{cases}$$

\begin{defn}
A \emph{$\Gamma$-space} $X$ is a functor $X: Fin_{\ast}\rightarrow Top$. We say $X$ is \emph{special} if the map
$$P_n: X(\underline{n})\rightarrow X(\underline{1})^{\times n},$$
obtained by assembling the maps $i_k$, is a weak equivalence for all $n\geq 0$.
\end{defn}

The conditions in the definition above roughly imply that the space $X(\underline{1})$ has a multiplication that is associative and commutative up to coherent higher homotopies. The precise statement in given by the following theorem:

\begin{thm}\cite[Prop. 1.4]{segal}\label{gspaces}
Let $X$ be a special $\Gamma$-space. Then $X(\un{1})$ is an $H$-space and its group completion, $\Omega B X(\un{1})$ is an infinite loop space.
\end{thm}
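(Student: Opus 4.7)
The plan is to produce a connective spectrum whose zeroth space is the group completion $\Omega BX(\un{1})$, following Segal's prolongation construction. The first step is to extend $X : Fin_{\ast} \to Top$ to a functor on pointed simplicial sets by left Kan extension: for a pointed simplicial set $K_\bullet$ with finite levels, set $X(K) := |[n] \mapsto X(K_n)|$, where the simplicial space has face and degeneracy maps induced from those of $K$ via functoriality of $X$. Applying this to iterated smash powers of the Segal circle $S^1_\bullet$ (the simplicial set with $S^1_n = \un{n}$) produces a sequence of spaces $B^kX := X((S^1)^{\wedge k})$ together with canonical structure maps $\Sigma B^kX \to B^{k+1}X$ coming from the inclusion $S^1 \wedge (S^1)^{\wedge k} \to (S^1)^{\wedge(k+1)}$.

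Before iterating I would identify the H-space structure on $X(\un{1})$. The fold $\un{2} \to \un{1}$ induces a map $X(\un{2}) \to X(\un{1})$ and, combined with a chosen homotopy inverse of the Segal equivalence $P_2 : X(\un{2}) \to X(\un{1})^{\times 2}$, gives a multiplication on $X(\un{1})$ well-defined up to homotopy. Naturality with respect to the swap $\un{2} \to \un{2}$ and to the appropriate surjections $\un{3} \to \un{2}$, composed with the fold, yields homotopy commutativity, homotopy associativity, and two-sided unitality. For the first delooping, I would consider the simplicial space $Y_\bullet := [n] \mapsto X(\un{n})$, which by the special hypothesis has all Segal maps $Y_n \to Y_1^{\times n}$ weak equivalences; its realization is $BX(\un{1}) := |Y_\bullet|$, and Segal's analysis of such Segal-type simplicial spaces provides a natural map $X(\un{1}) \to \Omega BX(\un{1})$ exhibiting the target as the group completion.

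Iterating this procedure produces the spectrum $\{B^kX\}_{k \geq 0}$, and the theorem reduces to verifying that the adjoint structure maps $B^kX \to \Omega B^{k+1}X$ are weak equivalences for $k \geq 1$. The main obstacle is precisely this step: for $k=0$ one obtains only a group completion and not an equivalence, so one must show that for $k \geq 1$ the H-space $B^kX$ is already grouplike, i.e.\ $\pi_0$ is a group rather than just a monoid. This is forced by the fact that $B^kX$ is itself a realization over the $S^1$-direction, so the multi-simplicial object $[n_1,\ldots,n_{k+1}] \mapsto X(\un{n_1}\wedge\cdots\wedge\un{n_{k+1}})$ inherits the Segal property in each variable while $\pi_0$ picks up inverses from the suspension coordinate. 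Executing this diagram chase carefully, as in Segal's proof of Proposition~1.4, completes the argument and identifies $\Omega BX(\un{1}) = B^0(\Omega B X)$ with the zeroth space of an $\Omega$-spectrum.
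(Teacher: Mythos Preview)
The paper does not give a proof of this theorem: it is stated purely as a citation of Segal's Proposition~1.4 and used as background, with no argument supplied. So there is no ``paper's own proof'' to compare your proposal against. Your sketch is a reasonable outline of Segal's original argument (prolongation to pointed simplicial sets, the Segal circle, and the grouplike-ness of the higher deloopings), and would be appropriate if the goal were to reproduce that proof; but for the purposes of this paper a citation suffices.
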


Segal and May \cite{mayperm} show how to construct a $\Gamma$-category from a symmetric monoidal category, thus getting an infinite delooping of the classifying space of a symmetric monoidal category. As we mention in the introduction, given a permutative category $\C$, we denote the associated spectrum by $\BE(\C)$. We will follow a similar approach in the context of bicategories.

\section{Statement of results}\label{roadmap}


In broad terms, a symmetric monoidal bicategory is a bicategory with a product pseudofunctor that is associative, unitary, and commutative up to coherent natural equivalences. The precise definition is quite involved, as one would imagine. For a precise definition and a historical account of the theory of symmetric monoidal bicategories we refer the reader to \cite{schommer}. Shulman \cite{shulman} provides a way of constructing examples of symmetric monoidal bicategories. 

In this paper we will be working with a strict version of symmetric monoidal bicategories, so that is what we will define here.

\begin{defn}
 A \emph{strict symmetric monoidal bicategory} $(\C, \bp, 1, \beta)$ consists of the following data:

 \begin{itemize}
  \item a bicategory $\C$;
  \item a pseudofunctor of bicategories $\bp: \C \times \C \rightarrow \C$;
  \item an object 1 in $\C$ called the \emph{unit};
  \item a transformation $\beta : \bp \rightarrow \bp \circ \tau,$ where $\tau$ denotes the twist pseudofunctor $\C \times \C \rightarrow \C \times \C$.
 \end{itemize}

The monoidal product $\bp$ is required to be strictly associative, and 1 is a strict unit. The following diagrams must commute:

\begin{equation*}
 \xymatrix{
  A\bp B\bp C \ar[r]^{\beta\bp I} \ar[dr]_{\beta} & B\bp A\bp C \ar[d]^{I\bp \beta}\\
  &B\bp C\bp A
 } \qquad
\xymatrix{
  A\bp B\bp C \ar[r]^{I \bp \beta} \ar[dr]_{\beta} & A\bp C\bp C \ar[d]^{\beta\bp I}\\
  &C\bp A\bp B
 }
\end{equation*}
\begin{equation*}
\xymatrix{
  A\bp B \ar@{=}[rr] \ar[dr]_{\beta} && A\bp B \\
  &B\bp A \ar[ur]_{\beta}
 }
\end{equation*}

\end{defn}

\begin{rmk}
 The strict version defined above is just one of the many ways in which one can strictify the notion of symmetric monoidal bicategory. In fact, there are different levels of strictness one could consider. We choose this level because it is convenient to work with and it covers the applications we have in mind. It is easy to check that this definition is a special case of the general definition of symmetric monoidal bicategory. On the other hand, it is not yet known whether or not a general symmetric monoidal category can be strictified.

 Indeed, it is known that any monoidal bicategory is equivalent to a Gray monoid \cite{GPS}, which is a 2-category with a fairly strict product pseudofunctor, which is strictly associative and unital. The extra data needed for a the monoidal bicategory to be symmetric is carried across the equivalence but does not necessarily get any stricter. It is unlikely that a symmetric monoidal bicategory can be strictified any further.
\end{rmk}

The symmetric monoidal structure on a bicategory translates into an $H$-space structure on its classifying space. In the case of symmetric monoidal categories, we know that we actually obtain an infinite loop space structure upon completion. To show that this is also the case for (strict) symmetric monoidal bicategories we will be using Segal's $\Gamma$-space machine in the context of bicategories.

Let $Bicat$ denote the category of (small) bicategories and pseudofunctors.

\begin{defn}\label{defequiv}
 A pseudofunctor $\F: \mathcal{C} \rightarrow \mathcal{D}$ is an \emph{equivalence of bicategories} if there exists a pseudofunctor $\G: \mathcal{D} \rightarrow \mathcal{C}$ and natural equivalences, i.e. weakly invertible transformations,
 $$id_{\mathcal{C}} \simeq \G \circ \F \quad \text{and}\quad id_{\mathcal{D}}\simeq \G \circ \F.$$
\end{defn}

Other authors use the term \emph{biequivalence} to refer to the definition above.

\begin{defn}
A \emph{$\Gamma$-bicategory} $\A$ is a functor $\A: Fin_{\ast}\rightarrow Bicat$. We say $A$ is \emph{special} if the map
$$P_n: \A(\underline{n})\rightarrow \A(\underline{1})^{\times n}$$
is an equivalence of bicategories for all $n\geq 0$.
\end{defn}

This definition is analogous to that of a special $\Gamma$-space, with the connection made clear by the following lemma, where $|\bN (-)|$ denotes the classifying space functor as constructed in Appendix \ref{classifying}.

\begin{lem}\label{lemma}
 Let $\A$ be a special $\Gamma$-bicategory. Then $|\bN \A|:\Gamma \rightarrow Top$ is a special $\Gamma$-space.
\end{lem}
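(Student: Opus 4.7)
The plan is to first unwind the definition of a special $\Gamma$-space for $|\bN\A|$, then reduce the problem to two general facts about the classifying space functor $|\bN(-)|$ defined in the appendix.

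Functoriality of $|\bN\A|$ as a functor $Fin_\ast\to Top$ is immediate from the functoriality of $|\bN(-)|:Bicat\to Top$ composed with $\A:Fin_\ast\to Bicat$. So the substantive content is specialness: for each $n\geq 0$ I must show that the map
\[
|\bN P_n|: |\bN\A(\un{n})| \longrightarrow |\bN\A(\un{1})^{\times n}|
\]
composed with the canonical comparison $|\bN(\C\times\D)|\to |\bN\C|\times|\bN\D|$ is a weak equivalence.

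I would separate this into two independent claims. First, that the classifying space functor preserves finite products up to weak equivalence, i.e., that the natural map
\[
|\bN(\C\times\D)| \longrightarrow |\bN\C|\times|\bN\D|
\]
is a weak equivalence. Inspecting the construction of $\bN$ in the appendix (a multi-simplicial nerve of a bicategory, whose geometric realization is then formed), this should follow from the standard fact that geometric realization commutes with finite products for nerves built out of simplicial sets, combined with the observation that the multisimplicial nerve of a product of bicategories factors as a product of multisimplicial sets. This is formal once the nerve construction is laid out.

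Second, and this is the main point, an equivalence of bicategories $\F:\C\to\D$ in the sense of Definition \ref{defequiv} should induce a weak equivalence $|\bN\F|:|\bN\C|\to|\bN\D|$. I expect this to be the main obstacle, since one has to translate a weakly invertible transformation $\eta: \id{\C}\simeq \G\circ\F$ into an actual homotopy at the level of classifying spaces. The standard strategy is to show that a transformation $\eta:F\to G$ between pseudofunctors induces a simplicial homotopy between $\bN F$ and $\bN G$, by constructing an explicit map $\bN\C\times\Delta^1\to\bN\D$ whose simplices encode the pasting diagrams built from the components $\eta_A$ and the 2-isomorphisms $\eta^2_f$. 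Realizing this simplicial homotopy gives a genuine homotopy $|\bN F|\simeq |\bN G|$; applying it to the two natural equivalences witnessing that $\F$ is an equivalence shows that $|\bN\F|$ and $|\bN\G|$ are mutually inverse up to homotopy, hence weak equivalences.

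With these two facts in hand, the lemma follows by combining them: the map $P_n$ at the bicategory level is an equivalence (by hypothesis that $\A$ is special), so applying $|\bN(-)|$ yields a weak equivalence $|\bN\A(\un{n})|\to|\bN\A(\un{1})^{\times n}|$, and then the product-preservation of $|\bN(-)|$ identifies the target with $|\bN\A(\un{1})|^{\times n}$, giving exactly the specialness condition for the $\Gamma$-space $|\bN\A|$.
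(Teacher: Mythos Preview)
Your proposal is correct and follows essentially the same approach as the paper: reduce to (i) product preservation of $|\bN(-)|$ and (ii) the fact that equivalences of bicategories induce homotopy equivalences, the latter via the standard argument that a transformation yields a homotopy (this is exactly Proposition~\ref{transhomo}). One small sharpening: the paper notes that $|\bN(-)|$ preserves products on the nose, not merely up to weak equivalence, since the Segal nerve of a product is the product of Segal nerves; this makes the argument slightly cleaner than your formulation.
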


\begin{proof}
 The classifying space functor $|\bN (-)|: Bicat \rightarrow Top$ preserves products and sends equivalences of bicategories to homotopy equivalences of spaces (Proposition \ref{transhomo}).
\end{proof}

\begin{thm}\label{gamma}
 Let $\C$ be a (strict) symmetric monoidal bicategory. Then there is a special $\Gamma$-bicategory $\widehat{\C}$ such that
$$\widehat{\C}(\underline{1})\cong \C .$$
Therefore the classifying space $|\bN \C|$ is an infinite loop space upon group completion.
\end{thm}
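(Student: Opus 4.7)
The plan is to upgrade Segal's construction of a $\Gamma$-category associated to a symmetric monoidal category to the bicategorical setting.

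For each $n\geq 0$, I would define $\widehat{\C}(\underline{n})$ to be the bicategory whose objects are tuples $\{B_S, C_{S,T}\}$ equipped with coherence data, where $S$ ranges over subsets of $\underline{n} \setminus \{0\}$, $B_S$ is an object of $\C$ (with $B_\emptyset = 1$), and for each pair of disjoint subsets $S$ and $T$, $C_{S,T}: B_S \bp B_T \to B_{S \cup T}$ is a 1-equivalence in $\C$. The remaining data consists of invertible 2-cells witnessing coherent associativity (for triples $S, T, U$ pairwise disjoint) and commutativity (twisting $C_{S,T}$ and $C_{T,S}$ via $\beta$), subject to pentagon- and hexagon-type axioms at the 2-morphism level. 1-morphisms and 2-morphisms of $\widehat{\C}(\underline{n})$ are families in $\C$ compatible with these structures.

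I would then check functoriality in $Fin_\ast$: a pointed map $f: \underline{n} \to \underline{m}$ induces a pseudofunctor $\widehat{\C}(f)$ sending $\{B_S, C_{S,T}\}$ to the system indexed by $T \subseteq \underline{m}\setminus\{0\}$ with $B'_T = B_{f^{-1}(T)}$, and correspondingly induced 1-morphisms and 2-cells. The isomorphism $\widehat{\C}(\underline{1}) \cong \C$ is then essentially tautological: the only nonempty subset of $\{1\}$ is $\{1\}$ itself, so the data reduces to a single object $B_{\{1\}}$ with all other data forced by strict unitality.

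The main step is showing $P_n: \widehat{\C}(\underline{n}) \to \C^{\times n}$ is an equivalence of bicategories. I would construct a pseudo-inverse $\Sigma_n$ sending $(D_1, \ldots, D_n)$ to the object with $B_S = \Bin{S} D_i$ (using the natural ordering of $\underline{n}$ and strict associativity of $\bp$), with $C_{S,T}$ built from iterates of $\beta$ that reorder the factors, and coherence 2-cells extracted from the axioms of the symmetric monoidal bicategory. The composite $P_n \circ \Sigma_n$ is the identity on $\C^{\times n}$, while $\Sigma_n \circ P_n$ is equivalent to the identity on $\widehat{\C}(\underline{n})$ via a transformation whose component at $\{B_S, C_{S,T}\}$ is assembled from the 1-equivalences $\Bin{S} B_{\{i\}} \to B_S$ obtained by iterating the structural $C_{S,T}$'s.

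Once this is in hand, Lemma \ref{lemma} gives that $|\bN \widehat{\C}|$ is a special $\Gamma$-space with $|\bN \widehat{\C}(\underline{1})| = |\bN \C|$, and Theorem \ref{gspaces} then yields the infinite loop space structure on the group completion. The main obstacle will be choosing the right coherence data on objects of $\widehat{\C}(\underline{n})$: weak enough that $\Sigma_n$ actually lands there with canonical choices coming from $\beta$ and the symmetric monoidal axioms, yet strong enough that the pseudo-natural equivalence between $\Sigma_n \circ P_n$ and the identity is well-defined and coherent. Verifying this ultimately reduces to careful bookkeeping with the pentagon, hexagon, and naturality diagrams at the 2-cell level, which is exactly where the pasting-diagram machinery set up in the background section will do the work.
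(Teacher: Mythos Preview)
Your overall strategy matches the paper's: build a Segal-style $\Gamma$-bicategory whose objects over $\underline{n}$ are subset-indexed systems of objects of $\C$ glued by 1-equivalences, define the $Fin_\ast$-functoriality by pullback along preimages, and verify specialness by exhibiting an explicit pseudo-inverse to the projection $P_n$ built from iterated $\bp$'s and braidings. The inverse you call $\Sigma_n$ is the paper's $i_n$, and your equivalence $\Sigma_n\circ P_n\simeq \mathrm{Id}$ is the paper's transformation $\xi$.

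The one substantive divergence is in how much coherence you load onto the \emph{objects} of $\widehat{\C}(\underline{n})$. You propose that an object carry, in addition to the 1-equivalences $C_{S,T}$, invertible 2-cells witnessing associativity and symmetry, subject to pentagon/hexagon-type axioms. The paper instead demands that the associativity square (for pairwise disjoint $S,T,U$) and the symmetry square (relating $a_{S,T}$ and $a_{T,S}$ via $\beta$) commute \emph{strictly} at the 1-morphism level; the 2-cell coherence data $\phi_{S,T}$ is attached to \emph{1-morphisms}, not objects. This is exactly where the strictness hypothesis on $\C$ is cashed in: because $\bp$ is strictly associative and the braiding $\beta$ satisfies its axioms on the nose, the image of the inverse $i_n$ really does land in systems with strictly commuting squares, so nothing is lost by imposing strictness there. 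Your weaker definition would also work, but it forces you to track an extra layer of 2-cell data on objects, formulate the corresponding compatibility for 1-morphisms, and verify more axioms when building $\xi$---bookkeeping the paper sidesteps entirely. (A minor point: your $C_{S,T}$ goes $B_S\bp B_T\to B_{S\cup T}$, the paper's $a_{S,T}$ goes the other way; since these are 1-equivalences this is only a convention.)
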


We delay the proof of this theorem until Section \ref{proofgamma}. Given a (strict) symmetric monoidal bicategory $\C$, we denote the associated spectrum produced by the machine above by $\BA(\C)$.

We would like to compare the $\Gamma$-bicategory construction to the standard $\Gamma$-category construction. To that end, we provide a map of the corresponding $\Gamma$-spaces obtained from a symmetric monoidal bicategory and a symmetric monoidal category when these are closely related.

Let $(\D,\op, 0, \tau)$ be a permutative category enriched over simplicial spaces. We further require that for all objects $A$ and $B$, the simplicial space $\D(A,B)$ is the levelwise nerve of a simplicial category (that is, a simplicial object in the category of small categories).

In Section \ref{constructionfat}, we construct a $\Gamma$-category $\widetilde{\D}$ inspired by the construction of the $\Gamma$-bicategory and relying on the simplicial enrichment. This $\Gamma$-category is levelwise equivalent to the standard $\Gamma$-category construction \cite{mayperm}. The following theorem makes the relationship between the two constructions explicit.

\begin{thm}\label{mton}
Let $\C$ be a strict symmetric monoidal bicategory and $\D$ a permutative category enriched over simplicial categories as above. Suppose that $\C$ and $\D$ have the same set of objects, and the symmetric monoidal structures coincide. Furthermore, suppose that for all pairs of objects $(A,B)$ there is a map
$$N\C(A,B) \rightarrow \D(A,B),$$
and that these maps are compatible with the categorical and symmetric monoidal structures on $\C$ and $\D$. More precisely, this means that these maps commute with (horizontal) composition and the monoidal product, and send the identity 1-morphism to the identity morphism. Then there is a canonical map of $\Gamma$-spaces
$$|\bN\widehat{\C}|\rightarrow |N \widetilde{\D}|.$$
Therefore there is an induced map of spectra
$$\BA(\C)\rightarrow \BE(\D).$$
\end{thm}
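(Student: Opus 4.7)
The plan is to build the map of $\Gamma$-spaces levelwise: construct, for each $\un{n} \in Fin_{\ast}$, a canonical simplicial map $\bN \widehat{\C}(\un{n}) \to N\widetilde{\D}(\un{n})$, then take geometric realization and verify naturality in $\un{n}$. Since $\widetilde{\D}$ is defined in Section \ref{constructionfat} in direct analogy with the $\Gamma$-bicategory $\widehat{\C}$, the two level-$\un{n}$ objects parametrize the same shape of data: families $\{C_S\}_{S \subseteq \un{n}}$ of objects together with comparison morphisms $C_S \op C_T \to C_{S\cup T}$ and higher coherence data. The hypothesis that $\C$ and $\D$ share their set of objects and that the monoidal products coincide on objects means any family $\{C_S\}$ that defines an object of $\widehat{\C}(\un{n})$ tautologically defines one of $\widetilde{\D}(\un{n})$.

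First I would unpack the definitions of $\widehat{\C}(\un{n})$ and $\widetilde{\D}(\un{n})$ in parallel and list their data: objects, 1-morphisms (or vertices of the hom-simplicial categories), and the higher cells encoding coherence. I would then define a pseudofunctor $F_n : \widehat{\C}(\un{n}) \to \widetilde{\D}(\un{n})$ that is the identity on objects and sends each comparison 1-morphism, each structural 2-isomorphism, and each piece of $F^2,F^0$ data in $\widehat{\C}(\un{n})$ to its image under the hypothesized hom-map $N\C(A,B) \to \D(A,B)$, viewed as a simplex in the simplicial category $\D(A,B)$. The stipulation that the hom-maps commute with horizontal composition and the monoidal product and preserve identities is exactly what is needed to ensure each coherence diagram in $\widehat{\C}(\un{n})$ is carried to the corresponding diagram defining $\widetilde{\D}(\un{n})$. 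Taking $\bN$ on the source and $N$ on the target and realizing produces the desired map at level $\un{n}$.

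Next, for a morphism $\phi: \un{n} \to \un{m}$ in $Fin_{\ast}$, the induced functors on $\widehat{\C}$ and $\widetilde{\D}$ act by the same reindexing of the families $\{C_S\}$ and restriction of their comparison morphisms. Because $F_n$ is built only from shared underlying data (objects, the hom-maps, and the monoidal product, which coincide on the two sides), the naturality square relating $\phi_*$ with $F_n$ and $F_m$ commutes on the nose. This yields a map of $\Gamma$-spaces $|\bN \widehat{\C}| \to |N\widetilde{\D}|$, and feeding it into the infinite loop space machines for $\BA$ (Theorem \ref{gamma}) and $\BE$ gives the advertised map of spectra $\BA(\C) \to \BE(\D)$.

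The main obstacle is the bookkeeping in the first step: one must verify that every structural 2-isomorphism of $\widehat{\C}(\un{n})$ (associators for iterated sums, instances of the symmetry $\beta$, and the pseudofunctoriality constraints of the assembly pseudofunctors) is carried by the nerve map $N\C(A,B) \to \D(A,B)$ to precisely the simplex in $\D(A,B)$ required by the definition of $\widetilde{\D}(\un{n})$. This is what the compatibility clauses in the hypothesis are designed to ensure, but each coherence cell must be matched individually. Once this matching is done, naturality in $\un{n}$ and passage to spectra are formal.
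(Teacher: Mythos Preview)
Your proposal is essentially the same approach the paper takes: identify objects of $\widehat{\C}(\un{n})$ with those of $\widetilde{\D}(\un{n})$, push 1- and 2-morphism data through the hom-maps $N\C(A,B)\to\D(A,B)$, and observe that the coherence axioms defining $\widehat{\C}(\un{n})$ are exactly what the pullback/equalizer conditions defining $\widetilde{\D}(\un{n})$ require. One point of friction: you speak of a ``pseudofunctor'' $F_n:\widehat{\C}(\un{n})\to\widetilde{\D}(\un{n})$, but the source is a bicategory while the target is only a simplicially enriched category, so there is no off-the-shelf notion of pseudofunctor to invoke. The paper handles this by working directly at the level of bisimplicial spaces: a $(p,q)$-simplex of $\bN\widehat{\C}(\un{n})$ is a diagram of objects, 1-morphisms $f^l_{ij}$, and coherence 2-cells, and one maps it to $N\widetilde{\D}(\un{n})_{p,q}$ by projecting to the spine entries $f^l_{i,i+1}$ and reading the resulting 1- and 2-morphisms as 0- and 1-simplices in the appropriate hom-spaces of $\widetilde{\D}(\un{n})$. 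Also note that the structural 1-morphisms go $a_{S,T}:A_{S\cup T}\to A_S\bp A_T$, not the direction you wrote; this does not affect the argument but will matter when you match the data.
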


The proof of this theorem is in Section \ref{proofmton}.

\section{Delooping $K$-theory of bimonoidal categories}\label{kthy}

In \cite{BDR}, N. Baas, B. Dundas and J. Rognes introduce the notions of $2K$-theory and 2-vector bundles as a way to categorify topological $K$-theory and vector bundles. One of their objectives is to define a cohomology theory of a geometric nature that has chromatic level 2.

In general they define the $K$-theory space of a (symmetric) bimonoidal category $\R$ in terms of a bar construction for monoidal categories (Subsection \ref{kthydef}) and construct an equivalence of spaces
\begin{equation*}\tag{\ref{equivk}}
\K (\R) \overset{\sim}\longrightarrow K(\BE(\R))
\end{equation*}
between the $K$-theory of the bimonoidal category $\R$ and the algebraic $K$-theory space of the ring spectrum $\BE(\R)$ (Subsection \ref{algkthy}).

In Subsection \ref{modr} we prove that the $K$-theory space of $\R$ can be constructed as the appropriate group completion of the classifying space the bicategory of modules over $\R$, $\Mod$, and in Subsection \ref{symm} we show that $\Mod$ is symmetric monoidal.

As such, we can use $\Mod$ as input for Theorem \ref{gamma} to obtain an infinite loop space structure on $\K(\R)$. On the other hand, the algebraic $K$-theory of the ring spectrum $\BE(\R)$ is constructed as the classifying space of the symmetric monoidal category of modules $\ModH$. With $\C=\Mod$ and $\D=\ModH$ as input for Theorem \ref{mton} we obtain a map of spectra
$$\BK(\R)=\BA(\Mod)\longrightarrow \BE(\ModH)=\BK(\BE(\R))$$
which at the level of zeroth spaces gives the equivalence in (\ref{equivk}).

\subsection{Definition of $K$-theory of a bimonoidal category}\label{kthydef}

A (symmetric) \emph{bimonoidal category} $(\R, \oplus, \otimes)$ is a category $\R$ endowed with a symmetric monoidal structure $\oplus$ and a (symmetric) monoidal structure $\otimes$, which distributes over $\oplus$ up to coherent isomorphisms. The precise definition can be found in \cite{laplaza} for the symmetric case and in \cite{EM} for the non-symmetric one, although in the latter the distributivity isomorphisms are not required to be invertible. Any symmetric bimonoidal category is equivalent to a bipermutative category \cite[Prop. VI.3.5]{bipermold} and any bimonoidal category is equivalent to a strict bimonoidal category \cite[Thm. 1.2]{guillou}.

\begin{defn}
 A \emph{strict bimonoidal category} ($\R, \oplus, 0, \gamma _{\oplus}, \otimes, 1, \delta$) is a permutative category ($\R, \oplus, 0, \gamma _{\oplus}$), together with a strict monoidal structure ($\otimes, 1$), such that right distributivity and nullity of zero hold strictly, and there is a left distributivity natural isomorphism
 \begin{equation*}
 \delta : a\otimes (b\oplus c) \rightarrow (a\otimes b)\oplus (a\otimes c).
 \end{equation*}

 These satisfy the coherence axioms spelled out in \cite[Definition 3.1]{guillou}.

\end{defn}

If we further require the product $\otimes$ to be permutative (strict symmetric monoidal), the category is called \emph{bipermutative}. Strict bimonoidal and bipermutative categories are analogues of semirings and commutative semirings, respectively.

Let $(\R, \oplus, 0, c_{\oplus}, \otimes, 1, \delta )$ be a  strict bimonoidal category. Then, as in \cite{BDR}, we can define $\M{n}$, the category of $n \times n$ matrices over $\R$. Its objects are matrices $V=(V_{i,j})_{i,j=1}^n$ whose entries are objects of $\R$. The morphisms are matrices $\phi=(\phi_{i,j})_{i,j=1}^n$ of isomorphisms in $\R$, such that the source (resp, target) of $\phi _{i,j}$ is the $(i,j)-$entry of the source (target) of $\phi$. As a category, $\M{n}$ is isomorphic to $\R ^{n \times n}$.

Moreover, $\M{n}$ is a monoidal category, with multiplication
$$\M{n}\times \M{n} \overset{\cdot}{\rightarrow} \M{n}$$
given by sending the pair $(U,V)$ to
$$W_{ik}=\bigoplus _{j=1}^n U_{ij}\otimes V_{jk}.$$

Since $\oplus$ is strictly associative, there is no ambiguity.

This multiplication has a unit object $I_n$, given by the matrix with $1$ in the diagonal and $0$ elsewhere. The objects $0$ and $1$ are strict units for $\oplus$ and $\otimes$ respectively, and the nullity of 0 holds strictly, so $I_n$ is a strict unit as well.

\begin{prop}\cite[3.3]{BDR}
Matrix multiplication makes $(\M{n}, \cdot, I_n)$ into a monoidal category.
\end{prop}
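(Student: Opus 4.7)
The plan is to check the three required data for a monoidal structure -- associator, left/right unit isomorphisms, and the pentagon and triangle axioms -- componentwise in each matrix entry, reducing everything to the coherence of the underlying strict bimonoidal structure.

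First, I would define the associator. Fix matrices $U,V,W$ and compute the $(i,l)$-entries of $(U\cdot V)\cdot W$ and $U\cdot(V\cdot W)$. On the left-hand side, strict right distributivity and strict associativity of $\otimes$ give
\[
((U\cdot V)\cdot W)_{il}=\bigoplus_{k}\Bigl(\bigoplus_{j}U_{ij}\otimes V_{jk}\Bigr)\otimes W_{kl}=\bigoplus_{k}\bigoplus_{j}U_{ij}\otimes V_{jk}\otimes W_{kl}.
\]
On the right-hand side, iterated application of the left-distributivity isomorphism $\delta$ yields
\[
(U\cdot(V\cdot W))_{il}\;\cong\;\bigoplus_{j}\bigoplus_{k}U_{ij}\otimes V_{jk}\otimes W_{kl}.
\]
The associator $\alpha_{U,V,W}$ is then the entrywise composite of $\delta$ with the permutation isomorphism built from the symmetry $c_\oplus$ that swaps the order of the double sum. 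Naturality in $U$, $V$, $W$ is inherited from the naturality of $\delta$ and $c_\oplus$.

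Next, for the unit, I would check that $I_n$ acts strictly. The entry $(U\cdot I_n)_{ik}=\bigoplus_j U_{ij}\otimes (I_n)_{jk}$ has one summand $U_{ik}\otimes 1=U_{ik}$ (strict unit for $\otimes$) and all other summands equal to $U_{ij}\otimes 0=0$ (strict nullity of zero). Since $0$ is a strict unit for $\oplus$, the sum collapses to $U_{ik}$. A symmetric argument handles $I_n\cdot U$. Therefore the left and right unit isomorphisms are identities, and the triangle axiom is trivially satisfied.

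Finally I would verify the pentagon axiom for $\alpha$. Evaluated at matrices $U,V,W,X$ and entry $(i,m)$, both ways around the pentagon produce an isomorphism
\[
\bigoplus_{k}\bigoplus_{j}\bigoplus_{\ell}U_{ij}\otimes V_{j\ell}\otimes W_{\ell k}\otimes X_{km}\;\longrightarrow\;\bigoplus_{j}\bigoplus_{\ell}\bigoplus_{k}U_{ij}\otimes V_{j\ell}\otimes W_{\ell k}\otimes X_{km},
\]
assembled from iterated left distributivity $\delta$, strict right distributivity, and reorderings by $c_\oplus$. The agreement of the two composites then reduces to a coherence statement for iterated distributivity and symmetry in a strict bimonoidal category, which is precisely what the coherence axioms of \cite{guillou} (or Laplaza's coherence theorem in the symmetric case \cite{laplaza}) guarantee. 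This coherence verification is the main technical obstacle: although each individual diagram is elementary, one must be careful to track the order in which $\delta$, $c_\oplus$, and the distributivity equalities are applied so that the hypothesis of the coherence theorem applies. Once coherence is invoked, the pentagon and triangle axioms follow, establishing the monoidal structure.
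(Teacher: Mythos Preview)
Your proposal is correct and follows exactly the approach indicated in the paper: the paper itself does not give a detailed proof but cites \cite[3.3]{BDR} and merely remarks that the associator $\alpha$ is given by ``entry-wise use of $c_\oplus$ and $\delta$,'' which is precisely what you spell out. Your treatment is considerably more detailed than the paper's one-line description, but the underlying argument is the same.
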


The natural associativity isomorphism
$$\alpha : U\cdot (V\cdot W) \rightarrow (U\cdot V)\cdot W$$
is given by entry-wise use of $c_{\oplus}$ and $\delta$.

Recall that if $R$ is a semi-ring, $GL_n(R)$ is the  subgroup of $M_n(R)$ that contains all the matrices whose determinant is a unit in the ring completion $Gr_+(R)$. The following definition is also taken from \cite{BDR}.

\begin{defn}
 Let $\GL{n}\subset \M{n}$ be the full subcategory of matrices $V=(V_{i,j})_{i,j=1}^n$ whose matrix of path components lies in $GL_n(\pi_0(\R))$. We call $\GL{n}$ the category of \emph{weakly invertible matrices}. By convention we will let $\GL{0}=\underline{1}$ be the unit category, with one object and one morphism.
\end{defn}

Note that $\GL{n}$ inherits a monoidal structure from $\M{n}$.

Given a monoidal category $\fM$, the authors  in \cite{BDR} define a bar construction for monoidal categories, $B\fM$, which is a simplicial object in $Cat.$  As pointed out in Remark \ref{barsegal}, this definition coincides with the Segal nerve of the bicategory $\Sigma \fM$, that is, the bicategory with one object whose category of morphisms is given by $\fM$.

We note that block sum of matrices in $\R$ makes
$$\coprod _{n\geq 0} |B\GL{n}|$$
into an $H$-space, and hence we define the $K$-theory of $\R$, as the group completion
$$\K(\R):=\Omega B \bigl(\coprod _{n\geq 0} |B\GL{n}| \bigr).$$

The motivation behind the definition of $K$-theory for bimonoidal categories comes from the categorification of complex $K$-theory. As we know well, the complex $K$-theory space classifies virtual vector bundles.

A 2-vector space, as defined in \cite{KV}, is a category equivalent to $(Vect_{\cx})^n$ for some $n$. Heuristically, this should be thought of as a module category over $Vect_{\cx}$. In \cite{BDR}, the authors introduce the notion of a complex 2-vector bundle over a topological space and construct a classifying space for these bundles. A 2-vector bundle is roughly a bundle of 2-vector spaces over $X$, defined in terms of transition functions, which are given by matrices of vector spaces. For the precise definition we refer the reader to \cite[Section 2]{BDR}.

One of the main results in \cite{BDR} is that the stable equivalence classes of virtual 2-vector bundles over a space $X$ are in one-to-one correspondence with homotopy classes of maps from $X$ to  $\K(Vect_{\cx})$, where $Vect_{\cx}$ is a considered as a bipermutative category using direct sum and tensor product.

\subsection{Relationship with algebraic $K$-theory}\label{algkthy}

Given a strict bimonoidal category $\R$, by forgetting the multiplicative structure, we can construct the spectrum $\BE(\R)$ associated to the permutative category $(\R, \oplus)$. The results of \cite{EM} and \cite{bipermnew} show that the multiplicative structure of $\R$ makes $\BE(\R)$ into a ring spectrum, and furthermore, if $\R$ is bipermutative, $\BE(\R)$ is an $E_{\infty}$ ring spectrum. Note that in \cite{BDR, BDRRst} the authors denote $\BE(\R)$ by $H\R$, pointing out the analogy to the Eilenberg-MacLane spectrum of a ring.

The natural inclusion $B\R \rightarrow \BE_0(\R) $ extends to a map $|B\GL{n}|\rightarrow BGL_n(\BE(\R))$. Taking the group completion of the disjoint union over $n$, we get the map
\begin{equation}\label{maineq}
\K (\R) \overset{\sim}\longrightarrow K(\BE(\R)),
\end{equation}
of (\ref{equivk}), which \cite{BDRRst} proves to be an equivalence of spaces. Here the
right-hand side is the zeroth space of $\BK(\BE(\R))$, which we view as the
algebraic $K$-theory ring spectrum associated to the ring spectrum $\BE(\R)$.

We are headed towards an intrinsic description of the left-hand side as
an infinite loop space in Theorem \ref{infloop} and a proof that the equivalence is
compatible with these infinite loop space structures in Corollary \ref{maincor}.

\subsection{$K$-theory as a classifying space of a bicategory}\label{modr}

\begin{defn}
Let $\Mod$ be the bicategory of finite dimensional free modules over $\R$, defined as follows. The objects are labeled by the natural numbers $\n{n} \geq 0$. Given objects $\n{n},\n{m}$, the category of morphisms is
$$\Mod(\n{n},\n{m})=
 \begin{cases}
    \GL{n} & \text{if}\quad n=m\\
    \emptyset       & \text{if}\quad n\neq m.
  \end{cases}$$
and the composition is given by matrix multiplication. In other words,
$$\Mod = \coprod _{n\geq 0} \Sigma \GL{n}.$$
\end{defn}


\begin{ex}
 Let $Vect_k$ be the bipermutative category of vector spaces over the field $k$. Then $Mod_{Vect_k}$ is a sub-bicategory of the bicategory of 2-vector spaces defined by Kapranov and Voevodsky \cite{KV}. The 1-morphisms are matrices of vector spaces such that their matrices of dimensions have determinant $\pm 1$.
\end{ex}

We can use the bicategory $\Mod$ to give an alternative definition of the $K$-theory of $\R$. We have that
$$\coprod _{n\geq 0} |B\GL{n}|=\coprod _{n\geq 0} \bN \Sigma \GL{n} = \bN \bigl( \coprod _{n\geq 0} \Sigma \GL{n} \bigr) = \bN \Mod,$$
where $\bN(-)$ denotes the Segal nerve (see Appendix \ref{classifying}). Hence, we can describe the $K$-theory space as $\Omega B |\bN \Mod|$.

This is the definition we will use in the following sections. Furthermore, we will show that the $H$-space structure comes from a pseudofunctor
$$\Mod \times \Mod \rightarrow \Mod,$$
which will give $\Mod$ the structure of a symmetric monoidal bicategory.

\subsection{Symmetric monoidal structure on $\Mod$}\label{symm}

Just as we can take direct sum of modules over a ring, we can take direct sum of modules over a bimonoidal category. This will provide $\Mod$ with a symmetric monoidal structure, which in turn will give rise to an infinite delooping of $\K(\R)$.

\begin{thm}
The bicategory $\Mod$ is strict symmetric monoidal with the monoidal operation given by block sum of matrices:
\begin{align*}
 \bp: \Mod \times \Mod &\rightarrow \Mod\\
 (\n{n},\n{m})&\mapsto \n{n+m}\\
 (U,V)&\mapsto \left[ \begin{array}{c|c}
     U & 0\\
\hline
0 & V\\
    \end{array}
\right]\\
 (\varphi, \psi) &\mapsto \left[ \begin{array}{c|c}
     \varphi & 0\\
\hline
0 & \psi\\
    \end{array}
\right].
\end{align*}
The matrix $[0]$ is the matrix with all entries equal to $0$, the unit of $\op$ in $\R$.

\end{thm}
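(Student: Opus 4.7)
The plan is to work through the definition of strict symmetric monoidal bicategory term by term, exploiting the very rigid nature of block sum in $\Mod$. First I would verify that $\bp$ is in fact a \emph{strict} $2$-functor $\Mod \times \Mod \to \Mod$, which is much stronger than a pseudofunctor. On objects, $\n{n} \bp \n{m} = \n{n+m}$ is strictly associative and has strict unit $\n{0}$. On hom-categories, $\bp: \GL{n} \times \GL{m} \to \GL{n+m}$ is a functor because the matrix of path components of a block-diagonal matrix is block-diagonal with the same determinant in $\pi_0(\R)$, which is invertible if both components are.

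The key computation is to show that block sum commutes strictly with matrix multiplication and identities, i.e. $(U \bp V) \cdot (U' \bp V') = (U \cdot U') \bp (V \cdot V')$ and $I_n \bp I_m = I_{n+m}$. Writing out the $(i,k)$ entry of the product of two block-diagonal matrices, the off-diagonal blocks involve only terms of the form $U_{ij} \otimes 0$ or $0 \otimes V'_{jk}$, which equal $0$ strictly by the strict nullity of zero in $\R$; the diagonal blocks pick up extra $0 \otimes 0 = 0$ summands that are absorbed strictly because $0$ is a strict unit for $\oplus$. This lets us take the structure 2-isomorphisms $\bp^2$ and $\bp^0$ to be identities, making all pseudofunctor coherence axioms trivial.

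Next I would define the braiding $\beta_{\n{n},\n{m}} \colon \n{n+m} \to \n{m+n}$ to be the permutation matrix
$\begin{pmatrix} 0_{m\times n} & I_m \\ I_n & 0_{n\times m} \end{pmatrix}$
that swaps the two blocks. Its determinant over $\pi_0(\R)$ is $\pm 1$, so $\beta_{\n{n},\n{m}} \in \GL{n+m}$. For the naturality 2-isomorphism, a direct block computation using the same strict nullity argument gives
\[
(g \bp f) \cdot \beta_{\n{n},\n{m}} \;=\; \begin{pmatrix} 0 & g \\ f & 0 \end{pmatrix} \;=\; \beta_{\n{n},\n{m}} \cdot (f \bp g),
\]
so the naturality 2-isomorphism $\beta^2$ can be taken to be the identity. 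Compatibility of $\beta^2$ with composition and units follows automatically.

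Finally I would check the three coherence diagrams. Since every 1-morphism in sight is a permutation matrix and the underlying multiplication is strict, each diagram reduces to an equality of permutation matrices on $\{1,\dots,n+m+p\}$: the two hexagon-type diagrams say that the cyclic permutation $ABC \mapsto BCA$ factors as $ABC \mapsto BAC \mapsto BCA$ (and symmetrically), and the third says $\beta_{\n{m},\n{n}} \cdot \beta_{\n{n},\n{m}} = I_{n+m}$. All three are immediate for permutation matrices. The main obstacle I anticipate is purely bookkeeping: one must be honest that ``block multiplication'' inside entries of $\R$ really does collapse strictly using only right distributivity, strict nullity, and strict units, never invoking the non-strict left distributivity isomorphism $\delta$. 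Once that is spelled out carefully for one representative computation, the entire verification is essentially automatic.
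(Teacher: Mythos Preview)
Your proposal is correct and follows essentially the same route as the paper: verify that block sum is a strict $2$-functor using strict nullity of $0$ and strict unitality of $0$ for $\oplus$, take $\n{0}$ as strict unit, define $\beta$ as the block permutation matrix, and check that $\beta$ is a strict transformation with $\beta_{m,n}\ast\beta_{n,m}=I_{n+m}$. If anything, you are slightly more thorough than the paper, which does not spell out the two hexagon diagrams or your observation that the left distributivity isomorphism $\delta$ is never invoked; these are good points to include.
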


\begin{proof}
 We first note that the operation described above gives a strict pseudofunctor of bicategories, since it preserves the identity and the composition:
 $$I_n \bp I_m = I_{n+m},$$
 $$\left[ \begin{array}{c|c}
     U' & 0\\
\hline
0 & V'\\
    \end{array}
\right] \ast \left[ \begin{array}{c|c}
     U & 0\\
\hline
0 & V\\
    \end{array}
\right]=\left[ \begin{array}{c|c}
     U'\ast U & 0\\
\hline
0 & V'\ast V\\
    \end{array}
\right].
$$

The second equation holds because of the strict nullity and unity of 0 in $\R$.

The unit of $\bp$ is $\n{0}$. We note that for $U\in \GL{n}$, $V\in \GL{m}$, and $W\in \GL{p}$:
$$(U\boxplus V)\boxplus W = U \boxplus (V \boxplus W),$$
$$I_0\boxplus U=U=U\boxplus I_0.$$

The natural equivalence $\beta _{n,m}: \n{n} \bp \n{m} \rightarrow \n{m} \bp \n{n}$  is given by the block matrix
$$\left[ \begin{array}{c|c}
     0 & I_m\\
\hline
I_n & 0\\
    \end{array}
\right].$$

Since $0$ and $1$ are strict units in $\R$, for $U\in \GL{n}$ and $V\in \GL{m}$,
$$\beta _{n,m}\ast (U\boxplus V)=\left[ \begin{array}{c|c}
     0 & V\\
\hline
U & 0\\
    \end{array}
\right]
=(V\boxplus U) \ast \beta_{n,m},$$
so $\beta$ is a strict transformation.

We note that $\beta _{m,n}\ast \beta_{n,m} = I_{n+m}$ which both implies that $\beta$ is a natural isomorphism and that it is its self-inverse. We conclude that $\Mod$ is a strict symmetric monoidal bicategory.
\end{proof}

This, together with Theorem \ref{gamma} proves the following theorem.

\begin{thm}\label{infloop}
The $K$-theory space of the bimonoidal category $\R$ is an infinite loop space, with the additive structure provided by the block sum of matrices. More precisely, we can identify $\K(\R)$ with the zeroth space of the spectrum $\BK(\R):=\BA(\Mod)$.
\end{thm}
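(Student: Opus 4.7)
The plan is to assemble three previously established facts. First, the theorem immediately preceding shows that $(\Mod,\bp,\n{0},\beta)$ is a strict symmetric monoidal bicategory. Second, Theorem \ref{gamma} takes any such bicategory $\C$ and produces a special $\Gamma$-bicategory $\widehat{\C}$, together with an infinite loop space $\Omega B|\bN\C|$ that is by definition the zeroth space of the spectrum $\BA(\C)$. Third, in Subsection \ref{modr} the Segal nerve of $\Mod$ was identified as
$$\bN\Mod \;=\; \bN\Bigl(\coprod_{n\geq 0}\Sigma\GL{n}\Bigr) \;=\; \coprod_{n\geq 0} |B\GL{n}|.$$

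Combining these, I would apply Theorem \ref{gamma} to $\Mod$ to obtain the special $\Gamma$-bicategory $\widehat{\Mod}$ with $\widehat{\Mod}(\un{1})\cong \Mod$. The associated infinite loop space is
$$\Omega B|\bN\Mod| \;=\; \Omega B\Bigl(\coprod_{n\geq 0}|B\GL{n}|\Bigr),$$
which is exactly the definition of $\K(\R)$ recorded in Subsection \ref{kthydef}. Setting $\BK(\R) := \BA(\Mod)$ then exhibits $\K(\R)$ as the zeroth space of this spectrum.

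The only point of substance is to check that the $H$-space structure used to form the group completion in the original definition of $\K(\R)$ agrees with the one induced on $|\bN\Mod|$ by the monoidal pseudofunctor $\bp$. This is immediate by inspection: on objects and $1$-morphisms, $\bp$ is literally the block sum of matrices, which is the same operation used in \cite{BDR} to give $\coprod_{n\geq 0}|B\GL{n}|$ its $H$-space structure. I do not anticipate any serious obstacle; once the previous theorem and the identification in Subsection \ref{modr} are in place, Theorem \ref{infloop} is a direct corollary of Theorem \ref{gamma}.
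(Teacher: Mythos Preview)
Your proposal is correct and matches the paper's approach exactly: the paper simply states that the preceding theorem (that $\Mod$ is a strict symmetric monoidal bicategory) together with Theorem \ref{gamma} proves Theorem \ref{infloop}, relying on the identification $|\bN\Mod|=\coprod_{n\geq 0}|B\GL{n}|$ from Subsection \ref{modr}. Your additional remark about the agreement of $H$-space structures is a reasonable sanity check that the paper leaves implicit.
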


Now we can look back at equation (\ref{maineq}). The right hand side is an infinite loop space, with the structure given by the $\Gamma$-space construction on the classifying space of the symmetric monoidal category $\ModH$ of finitely generated free modules over $\BE(\R)$. In order to prove that the map in equation (\ref{maineq}) is a map of infinite loop spaces we will prove that the map extends to a map of $\Gamma$-spaces. In particular, we prove the following theorem.

\begin{thm}\label{zigzag}
 There is a zigzag of maps of $\Gamma$-spaces
 $$|\bN\widehat{\Mod}| \longrightarrow |N\widetilde{\ModH}| \overset{\sim}\longleftarrow |N\widehat{\ModH}|.$$
 At level 1, the right-hand map is an equality, and the left-hand map corresponds to the map in equation (\ref{maineq}).
\end{thm}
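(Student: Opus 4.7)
The plan is to produce the left-hand map as an application of Theorem \ref{mton} with $\C = \Mod$ and $\D = \ModH$, and to produce the right-hand equivalence from the levelwise equivalence between $\widetilde{\D}$ and the standard $\Gamma$-category construction $\widehat{\D}$ that is established in Section \ref{constructionfat}. The level-1 claims will then follow by unwinding both constructions at $\un{1}$.

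First I would verify the hypotheses of Theorem \ref{mton}. We have just shown that $\Mod$ is a strict symmetric monoidal bicategory, and $\ModH$ is a permutative category with block sum of matrices; its hom-sets are naturally simplicial sets (and in fact levelwise nerves of simplicial categories) because morphisms of free modules over the ring spectrum $\BE(\R)$ assemble into a mapping space. Both $\Mod$ and $\ModH$ have the same set of objects $\{\n{n} \mid n \geq 0\}$, and the monoidal products and symmetries agree on the nose: both are given by block sum of matrices with unit $\n{0}$ and twist the block-permutation matrix. The required maps on morphism categories
$$N\Mod(\n{n},\n{n}) \;=\; N\GL{n} \;\longrightarrow\; GL_n(\BE(\R)) \;=\; \ModH(\n{n},\n{n})$$
arise by applying $\BE$ entrywise to matrices, starting from the canonical comparison $N\R \to \BE_0(\R)$. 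Because $\BE$ is a functor of (strict) bimonoidal structures \cite{EM, bipermnew}, these maps commute with matrix multiplication (horizontal composition in $\Mod$), preserve the identity matrix $I_n$, and are compatible with block sum and block permutation. Invoking Theorem \ref{mton} then produces the left-hand map of $\Gamma$-spaces $|\bN\widehat{\Mod}| \to |N\widetilde{\ModH}|$.

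For the right-hand map, the comparison established in Section \ref{constructionfat} yields a natural transformation $\widehat{\ModH} \to \widetilde{\ModH}$ of $\Gamma$-categories which is a levelwise equivalence; applying the nerve and realization functors gives the required equivalence of $\Gamma$-spaces. At level $\un{1}$, both $\widehat{\ModH}(\un{1})$ and $\widetilde{\ModH}(\un{1})$ are identified with $\ModH$ itself, and the comparison map is the identity there, yielding the first level-1 claim. For the left-hand map, tracing through the proof of Theorem \ref{mton} one sees that at level $\un{1}$ it reduces to the assembled map $|\bN \Mod| \to |N \ModH|$, which on the component indexed by $n$ is precisely the map $|B\GL{n}| \to BGL_n(\BE(\R))$; after summing over $n$ and group completing the resulting $H$-space this is the map of (\ref{maineq}).

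The main obstacle will be verifying that the left-hand map at level $\un{1}$ agrees on the nose with the BDR map of (\ref{maineq}), rather than merely up to a nontrivial homotopy. This requires carefully matching the Segal nerve $\bN$ of the one-object bicategory $\Sigma \GL{n}$ (defined via the construction of Appendix \ref{classifying}) against the analogous nerve of the simplicial category $GL_n(\BE(\R))$ used in the $\widetilde{\ModH}$ side of Theorem \ref{mton}. A subsidiary but routine check is the compatibility of the entrywise map $N\R \to \BE_0(\R)$ with the distributivity isomorphism $\delta$ once assembled into matrix multiplication; this ultimately reduces to the coherence verifications already carried out in \cite{EM, bipermnew}.
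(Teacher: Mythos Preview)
Your proposal is correct and follows essentially the same approach as the paper: verify that $\ModH$ meets the enrichment hypotheses needed for Section~\ref{constructionfat} and Theorem~\ref{mton}, then apply Theorem~\ref{mton} with $\C=\Mod$ and $\D=\ModH$ (using the compatibility of $N\GL{n}\to\GLH{n}$ with block sum and matrix multiplication) for the left-hand map, and invoke the levelwise equivalence $i:\widehat{\ModH}\to\widetilde{\ModH}$ from Section~\ref{constructionfat} for the right-hand map. The paper's own proof is considerably terser but relies on exactly these two ingredients.
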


\begin{cor}\label{maincor}
There is a zigzag of equivalences of spectra
$$\BK(\R) \overset{\sim}\longrightarrow \widetilde{\BK}(\BE(\R)) \overset{\sim}\longleftarrow \BK(\BE(\R)),$$
which at the level of zeroth spaces gives the maps
$$\K(\R) \underset{(\ref{maineq})}\longrightarrow \widetilde{K}(\BE(\R)) \underset{=}\longleftarrow K(\BE(\R)).$$
\end{cor}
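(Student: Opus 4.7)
The plan is to deduce the corollary directly from Theorem \ref{zigzag} by applying the spectrification procedure and checking equivalences levelwise.

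First, I would observe that all three $\Gamma$-spaces involved in Theorem \ref{zigzag}, namely $|\bN\widehat{\Mod}|$, $|N\widetilde{\ModH}|$ and $|N\widehat{\ModH}|$, are special: the first by Lemma \ref{lemma} applied to the special $\Gamma$-bicategory $\widehat{\Mod}$ of Theorem \ref{gamma}, the third by the classical Segal--May construction, and the middle one because, as stated in Section \ref{constructionfat}, $\widetilde{\ModH}$ is levelwise equivalent to the standard $\Gamma$-category $\widehat{\ModH}$. Applying the standard machine that turns a special $\Gamma$-space into a connective spectrum to the zigzag of Theorem \ref{zigzag} then produces the zigzag
$$\BK(\R)\longrightarrow \widetilde{\BK}(\BE(\R)) \longleftarrow \BK(\BE(\R))$$
of spectra appearing in the statement.

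Next I would show that both maps are stable equivalences by verifying they are level-1 equivalences, and invoking the specialness of the $\Gamma$-spaces. At level 1, the right-hand map is an equality by Theorem \ref{zigzag}, so it is trivially an equivalence. At level 1, the left-hand map is, again by Theorem \ref{zigzag}, the map
$$\K(\R)\longrightarrow \widetilde{K}(\BE(\R))$$
of (\ref{maineq}) (up to the identification $\widetilde{K}(\BE(\R)) = K(\BE(\R))$ coming from the other leg of the zigzag), which is known to be a weak equivalence of spaces by \cite{BDRRst}. Using the specialness of both $\Gamma$-spaces, the maps $P_n$ in the two domains are equivalences onto the $n$-fold product of the level-1 space, so a level-1 equivalence propagates to a level-$n$ equivalence for every $n\geq 0$. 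Hence both arrows in the zigzag are levelwise weak equivalences of $\Gamma$-spaces, therefore stable equivalences of the associated connective spectra.

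Finally, I would identify the zeroth spaces. By construction, the zeroth space of the spectrum associated to a special $\Gamma$-space $X$ is the group completion $\Omega B X(\un{1})$, so by Theorem \ref{infloop} and the definitions in Section \ref{algkthy}, the zeroth spaces of the three spectra in the zigzag are $\K(\R)$, $\widetilde{K}(\BE(\R))$ and $K(\BE(\R))$ respectively, and the induced maps between them are those displayed in the corollary. The bulk of the work has already been done: the main obstacle is really contained in Theorem \ref{zigzag}, which both produces the zigzag of $\Gamma$-space maps and identifies the left-hand map at level 1 with (\ref{maineq}); once that identification is in hand, the passage to spectra is formal.
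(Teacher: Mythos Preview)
Your proposal is correct and follows the same route the paper takes implicitly: the paper states Corollary~\ref{maincor} immediately after Theorem~\ref{zigzag} with no separate proof, treating it as a formal consequence of the zigzag of $\Gamma$-space maps together with the level-1 identifications and the equivalence (\ref{maineq}) from \cite{BDRRst}. Your write-up simply spells out the details of that passage---specialness of the three $\Gamma$-spaces, propagation of the level-1 equivalence via the maps $P_n$, and identification of the zeroth spaces---and is exactly what the paper leaves to the reader.
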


The model we are taking for $\BE(\R)$ is that of \cite{EM}. We construct $\ModH$ as follows.

Let $\GLH{n}$ be the group-like monoid of weakly invertible matrices over $\BE(\R)$. It is defined by the pullback
\begin{equation*}
\xymatrix{
\GLH{n} \ar[r]\ar[d]&\on{hocolim}_{\textbf{m}\in I}\Omega ^{m}M_n(\BE(\R)(m))\ar[d] \\
GL_n(\pi _0 \BE(\R)) \ar[r]& M_n(\pi _0 \BE(\R)),
}
\end{equation*}

The category $\ModH$ has as objects the natural numbers $\n{n}$. The space of morphisms is given by
$$\ModH (\n{n},\n{m})=
 \begin{cases}
    \GLH{n} & \text{if}\quad n=m\\
    \emptyset       & \text{if}\quad n\neq m.
  \end{cases}$$

Note that since $\BE(\R)(0)=N\R$, there is a map of spaces
$$N\GL{n}\longrightarrow \GLH{n}.$$

\begin{proof}[Proof of Theorem \ref{zigzag}]

 In the construction of the ring spectrum $\BE(\R)$ in \cite{EM}, the spaces of the spectrum are nerves of simplicially enriched categories. This implies that $\GLH{n}$ is the nerve of a simplicially enriched category, and thus also of a simplicial category. Thus $\ModH$ satisfies the conditions for the construction and theorems of Sections \ref{constructionfat} and \ref{mton}.

Furthermore, since the map $N\GL{n}\rightarrow \GLH{n}$ is compatible with the block sum of matrices, we can let $\C$ and $\D$ be $\Mod$ and $\ModH$ respectively in the statement of Theorem \ref{mton}. This proves the theorem.
\end{proof}

\section{Proof of Theorem \ref{gamma}}\label{proofgamma}


We will first construct the bicategory $\gC{n}$ for $n\geq 0$ as follows:

\begin{enumerate}
 \item Objects are of the form $\ob{A}{a} _{S,T}$, where $S$ runs over all the subsets of $\underline{n}$ that do not contain the basepoint 0; $(S,T)$ runs over all pairs of such subsets such that $S\cap T=\emptyset$; $A_S \in Ob \C$ and $a_{S,T} : A_{S\cup T}\rightarrow A_S \boxplus A_T$ is a 1-equivalence, that is a 1-morphism that is invertible up to isomorphism. We require further

\begin{enumerate}
\item \label{axiomobj1} $A_{\emptyset}=0$;
\item $a_{\emptyset,S}=\I{A_S}=a_{S,\emptyset}$;
\item for every triple $(S,T,U)$ of subsets such that $S\cap T=S\cap U=T\cap U=\emptyset$, the diagram
\begin{equation}\label{astu}
\xymatrixcolsep{5pc}\xymatrixrowsep{4pc}\xymatrix{
A_{S\cup T\cup U} \ar[d]_{a_{S\cup T,U}} \ar[r]^{a _{S,T\cup U}} &A_S\boxplus A_{T\cup U} \ar[d]^{\I{A_S}\boxplus a_{T,U}}\\
A_{S\cup T}\boxplus A_U \ar[r]_{a _{S,T}\boxplus \I{A_U}} &A_S \boxplus A_T \boxplus A_U}
\end{equation}
strictly commutes;

\item \label{axiomobj4} for every pair of subsets $(S,T)$, the diagram
\begin{equation}\label{ats}
\xymatrix{
A_{S\cup T} \ar@{=}[d] \ar[r]^{a_{S,T}}&A_S\boxplus A_T \ar[d]^{\beta_{A_S,A_T}}\\
A_{T\cup S} \ar[r]_{a _{T,S}}&A_T\boxplus A_S
}
\end{equation}
strictly commutes.

\end{enumerate}
\item A 1-morphism between $\ob {A}{a}$ and $\ob {A'}{a'}$ is given by a system $\ob {f}{\phi} _{S,T}$, where $S,T$ are as above; $f_S: A_S \rightarrow A' _S$ is a 1-morphism in $\C$ and $\phi _{S,T}$ is a 2-isomorphism:
$$\xymatrix{
A_{S\cup T} \ar[r]^{a _{S,T}} \ar[d]_{f_{S\cup T}}   \drtwocell<\omit>{<0>\quad \phi _{S,T}} &A_S\boxplus A_T \ar[d]^{f_S\boxplus f_T} \\
A'_{S\cup T} \ar[r]_{a'_{S,T}}&A'_S\boxplus A'_T.
}$$

We require:
\begin{enumerate}
 \item $\phi_{\emptyset,S}: f_S \ast I_{A_S}=f_S \Rightarrow f_S=I_{A_S}\ast f_S$ is the identity 2-morphism and similarly for $\phi_{S,\emptyset}$;
\item for every pairwise disjoint $S,T,U$ the following equation holds
\begin{equation}\label{phistu}
\xymatrixrowsep{3pc}\xymatrix{
A_{S\cup T\cup U} \ar[r]^-{a _{S,T\cup U}} \ar[d]_{f_{S\cup T\cup U}}   \drtwocell<\omit>{\qquad \phi _{S,T\cup U}} &A_S\bp A_{T\cup U} \ar[d]|(0.4){\quad f_S\bp f_{T \cup U}} \ar[r]^-{\I{A_S}\bp a _{T, U}} \drtwocell<\omit>{\qquad \qquad l^{-1}\ast r \bp \phi } & **[r]A_S\bp A_T\bp A_U \ar[d]^{f_S\bp f_T\bp f_U} \\
A'_{S\cup T \cup U} \ar[r]_-{a'_{S,T\cup U}}&A'_S\bp A'_{T\cup U} \ar@{}[d]|{\parallel} \ar[r]_-{\I{A'_S}\bp a'_{T,U}} & **[r]A'_S\bp A'_T\bp A'_U\\
A_{S\cup T\cup U} \ar[r]^-{a _{S\cup T, U}} \ar[d]_{f_{S\cup T\cup U}}   \drtwocell<\omit>{\qquad \phi _{S\cup T, U}} &A_{S\cup T}\bp A_U \ar[d]|(0.4){\quad f_{S\cup T}\bp f_U} \ar[r]^-{a_{S, T}\bp\I{A_U}} \drtwocell<\omit>{\qquad \quad \phi \bp l^{-1}\ast r} & **[r]A_S\bp A_T\bp A_U \ar[d]^{f_S\bp f_T\bp f_U} \\
A'_{S\cup T \cup U} \ar[r]_-{a'_{S\cup T,U}}&A'_{S\cup T}\bp A'_U \ar[r]_-{a'_{S,T}\bp \I{A'_U}} & **[r]A'_S\bp A'_T\bp A'_U,
}
\end{equation}

where $r,l$ denote the coherent identity isomorphisms in $\C$, that is $l_f:I_B \ast f \rightarrow f$ and $r_f:f\ast I_A \rightarrow f$;

\item for every $S,T$ the following equation holds:
\begin{equation}\label{phits}
\xymatrix{
A_{S\cup T} \ar[r]^{a_{S,T}} \ar[d]_{f_{S\cup T}} \drtwocell<10><\omit>{\quad \phi _{S, T}} &A_S\bp A_T \ar[d]|{\quad f_S\bp f_T} \ar[r]^{\beta}  \drtwocell<10><\omit>{\qquad \beta_{f_S,f_T}}& A_T\bp A_S \ar[d]^{f_T\bp f_S} \ar@{}[drr]|{=} &&A_{T\cup S} \ar[r]^{a_{T,S}} \ar[d]_{f_{T\cup S}} \drtwocell<\omit>{\quad \phi _{T, S}} & A_T\bp A_S \ar[d]^{f_T\bp f_S} \\
A'_{S\cup T} \ar[r]_{a'_{S,T}} &A'_S\bp A'_T \ar[r]_{\beta} & A'_T\bp A'_S &&A'_{T\cup S} \ar[r]_{a'_{T,S}} &A'_T\bp A'_S .}
\end{equation}

\end{enumerate}

\item Given 1-morphisms $\{ f_S, \phi _{S,T}\} ,\{ g_S, \gamma _{S,T}\} : \{ A_S, a_{S,T}\} \rightarrow \{ A'_S, a' _{S,T}\}$, a 2-morphism between them is given by a system $\{ \psi _S\}$ of 2-morphisms in $\C$, $\psi _S :f_S \Rightarrow g_S$, such that for all $S,T$ as above the following equation holds:
\begin{equation}\label{bst}
\xymatrixrowsep{4pc}\xymatrix{
A_{S\cup T} \ar[rr]^{a_{S,T}} \dtwocell<7>_{g_{S\cup T \quad}}^{\quad f_{S\cup T}}{\psi _{S\cup T}} \drrtwocell<\omit>{\quad \phi _{S,T}}& & A_S\bp A_T \ar[d]^{f_S\bp f_T} \ar@{}[drr]|{=} && A_{S\cup T} \ar[rr]^{a_{S,T}} \ar[d]_{g_{S\cup T}} \drrtwocell<\omit>{<2>\gamma _{S,T} \qquad \quad }& & A_S\bp A_T \dtwocell<7>_{g_S\bp g_T \qquad }^{\qquad f_S\bp f_T}{\psi _S \bp \psi _T}\\
A'_{S\cup T} \ar[rr]_{a'_{S,T}} & & A'_S\bp A'_T &&
A'_{S\cup T} \ar[rr]_{a'_{S,T}} & & A'_S\bp A'_T
}
\end{equation}

\end{enumerate}

We now need to show that these data indeed define a bicategory. We will first show that given objects $\ob {A}{a}$, $\ob{A'}{a'}$, the 1-morphisms and 2-morphisms form a category $\gC{n}(\ob{A}{a}, \ob{A'}{a'}).$

Vertical composition of 2-morphisms $\{\psi _S\}, \{\psi '_S\}$ is defined componentwise. We show that this composition satisfies equation (\ref{bst}). Given
$$\xymatrixcolsep{2pc}\xymatrix{
**[l]\{ A_S, a_{S,T}\} \rlowertwocell<10>^{\{ f, \phi \}}{\quad\{ \psi \}}  \ar[r]_(0.3){\{ g, \gamma \}} \rlowertwocell<-10>_{\{ h, \eta \}}{\quad \{ \psi ' \}} & **[r]\{ A'_S, a'_{S,T}\}}$$
we see that for all $S,T$:
$$\xymatrixrowsep{4pc}\xymatrix{
A_{S\cup T} \ar[r]^{a_{S,T}} \dlowertwocell<-7>_{h}{\psi '} \duppertwocell<7>^f{\psi} \ar[d]|(0.3)g \drtwocell<\omit>{\phi } 
& A_S\bp A_T \ar[d]^{f\bp f} \ar@{}[drr]|{=}                                                                                 
&                                                                                                                            
&A_{S\cup T} \ar[rr]^{a_{S,T}} \dtwocell<5>_h^g{\psi '}   \drrtwocell<\omit>{<2>\gamma }                                     
&                                                                                                                            
& A_S\bp A_T \dtwocell<5>_{g\bp g \quad}^{\quad f\bp f}{\psi \bp \psi} \ar@{}[drr]|{=}\\                                     
A'_{S\cup T} \ar[r]_{a'_{S,T}}                                                                                               
& A'_S\bp A'_T                                                                                                               
&                                                                                                                            
& A'_{S\cup T} \ar[rr]_{a'_{S,T}}                                                                                            
&
& A'_S\bp A'_T
&
&}
$$
$$\xymatrixrowsep{4pc}\xymatrix{
A_{S\cup T} \ar[rr]^{a_{S,T}} \ar[d]_h \drrtwocell<\omit>{<2>\eta }& & A_S\bp A_T \dlowertwocell<-10>_{h\bp h \quad}{\psi ' \bp \psi '} \ar[d]|(0.3){g\bp g} \duppertwocell<10>^{\quad f\bp f}{\psi \bp \psi}\\
A'_{S\cup T} \ar[rr]_{a'_{S,T}} & & A'_S\bp A'_T}
$$
as wanted.

We also note that $\{ \id{f_S} \}$ is a well-defined automorphism for $\{ f_S, \phi _{S,T}\}$ and it is the identity of the componentwise composition.

The composition functor $\ast$ is given by:
\begin{align*}
(\ob{g}{\gamma},\ob{f}{\phi})&\mapsto \{ g_S \ast f_S, (\gamma \diamond \phi)_{S,T} \}\\
(\{ \psi '_S\} ,\{ \psi _S\}) &\mapsto \{ \psi '_S\ast \psi _S\},
\end{align*}
where the 2-morphism $(\gamma \diamond \phi)_{S,T}$ is defined by the pasting diagram:
$$\xymatrixcolsep{.5pc}\xymatrix{
A_{S\cup T} \ar[d]_{f_{S\cup T}} \ar[rrrr]^{a_{S,T}} \drrtwocell<\omit>{\quad \phi _{S,T}} &&& \ddtwocell<\omit>&A_S\boxplus A_T \ar[dll]^{f_S\boxplus f_T} \ar[dd]^{g_S \ast f_S \bp g_T \ast f_T} \\
A'_{S\cup T} \ar[d]_{g_{S\cup T}} \ar[rr]^{a'_{S,T}} \drrtwocell<\omit>{\quad \gamma _{S,T}}&&A'_S\boxplus A'_T \ar[drr]^{g_S\boxplus g_T} \\
A''_{S\cup T} \ar[rrrr]_{a''_{S,T}}&&&&A''_S\boxplus A''_T
.}$$

The unmarked 2-isomorphism is the coherent 2-morphism corresponding to the weak functoriality of $\bp$.

Showing that $\{ g_S \ast f_S, (\gamma \diamond \phi)_{S,T} \}$ is a well-defined 1-morphism (that is, it satisfies equations (\ref{phistu}) and (\ref{phits})) can be done again using pasting diagrams, the coherence of the functoriality isomorphism and the fact that both $\ob{f}{\phi}$ and $\ob{g}{\gamma}$ satisfy those same equations. Analogously we can show that $\{ \psi '_S \ast \psi _S\}$ is a well-defined 2-morphism.

The natural associativity isomorphism in this bicategory is given by the componentwise associativity isomorphisms in $\C$. More precisely, given composable 1-morphisms $\{ f_S, \phi _{S,T}\}$, $\{ g_S, \gamma _{S,T}\}$, and $\{ h_S, \eta _{S,T}\}$, we define the 2-morphism $\{ \alpha _S \}$, where
$$\alpha _S: h_S\ast(g_S\ast f_S) \Rightarrow (h_S\ast g_S)\ast f_S$$
is the associativity isomorphism in $\C$.

The fact that
$$\{ (h_S\ast g_S)\ast f_S, ((\eta \diamond \gamma)\diamond \phi )_{S,T}\} \Rightarrow \{ h_S\ast (g_S\ast f_S), (\eta \diamond (\gamma \diamond \phi))_{S,T}\}$$
is an allowed 2-morphism in $\gC{n}$ will follow from the uniqueness of pasting diagrams and the functoriality of $\bp$. Naturality and the pentagonal axiom follow from those in $\C$.

Given an object $\{ A_S, a_{S,T}\}$, the identity 1-morphism is given by $\{ \I{A_S}, \iota _{S,T}\}$, where $\iota$ is the appropriate coherent 2-isomorphism obtained by the composition of instances of the 2-morphisms $\bp ^0$ and $r^{-1}\ast l$. It is clear that this is an allowed 1-morphism in $\gC{n}$ and that it is a weak identity, with right and left identity 2-isomorphisms given by $\{ r_{f_S}\}$, $\{l_{f_S}\}$. We conclude thus that $\gC{n}$ is indeed a bicategory.

We now need to prove that this construction extends to a functor $\widehat{\C}:Fin_{\ast}\rightarrow Bicat$. Given a morphism $\theta : \underline{n} \rightarrow \underline{m}$ in $Fin_{\ast}$ we define a pseudofunctor
$$\theta _{\ast} :\gC{n}\rightarrow \gC{m}$$
as follows:
\begin{align*}
 \{ A_S, a_{S,T}\} &\longmapsto \{ A^{\theta}_U, a^{\theta}_{U,V}\}=\{ A_{\theta^{-1}(U)}, a_ {\theta^{-1}(U), \theta^{-1}(V)}\}\\
\{ f_S, \phi _{S,T}\} &\longmapsto \{ f^{\theta}_U, \phi ^{\theta}_{U,V}\}=\{ f_{\theta^{-1}(U)}, \phi_ {\theta^{-1}(U), \theta^{-1}(V)}\}\\
\{ \psi _S\} &\longmapsto \{ \psi ^{\theta}_U\}=\{ \psi _{\theta^{-1}(U)}\},\\
\end{align*}
where $U,V$ range over disjoint subsets of $\underline{m}$ that do not contain the basepoint. Since $\theta$ is basepoint preserving, $\theta ^{-1} (U)$ does not contain the basepoint and it is an allowed indexing subset of $\un{n}$. Also, since $U$ and $V$ are disjoint, their pre-images under $\theta$ are also disjoint.

This assignment commutes strictly with all the compositions and identities in $\gC{n}$ and $\gC{m}$, giving a pseudofunctor between these bicategories.

It is clear from the construction that $\gC{1}$ is isomorphic to $\C$.

We will end the proof by showing that for every $n\geq 0$, the pseudofunctor
$$p_n: \gC{n}\rightarrow \C ^{\times n}$$
is an equivalence of bicategories (Definition \ref{defequiv}. This will show that the $\Gamma$-bicategory is special. For ease of notation we will denote the subset $\{ i\} \in \underline{n}$ as $i$. The pseudofunctor $p_n$ takes
\begin{align*}
  \{ A_S, a_{S,T}\} &\longmapsto \{ A_i\} _{i=1}^n\\
\{ f_S, \phi _{S,T}\} &\longmapsto \{ f_i\} _{i=1}^n\\
\{ \psi _S\} &\longmapsto \{ \psi _i\} _{i=1}^n.
\end{align*}

We will define an inverse pseudofunctor $i_n :\C ^{\times n} \rightarrow \gC{n}$:
\begin{align*}
\{ A_i\} _{i=1}^n &\longmapsto \{ \Bin{S} A_i, e_{S,T}\}\\
\{ f_i\} _{i=1}^n &\longmapsto \{ \Bin{S} f_i, \epsilon _{S,T} \}\\
\{ \psi _i\} _{i=1}^n &\longmapsto \{ \Bin{S} \psi _i \}.
\end{align*}

Here, $\boxplus _{i\in S}$ denotes the iterated monoidal operation $\boxplus$ with the usual order of the indices in $S\subset \underline{n}$. Recall that $\boxplus$ is strictly associative,

The 1-morphism
$$e_{S,T}: \Bin{S\cup T} A_i \longrightarrow \Bin{S}A_i \bp \Bin{T}A_i$$
is the unique composition of instances of the braiding $\beta$ that reorders the summands. It is clear that $\{ \Bin{S} A_i, e_{S,T}\}$ satisfies equations (\ref{astu}) and (\ref{ats}).

The 2-isomorphism
$$\epsilon_{S,T}: (\Bin{S} f_i \bp \Bin{T}f_i)\ast e_{S,T}\rightarrow e_{S,T}\ast (\Bin{S\cup T}f_i)$$
is given by pasting the appropriate instances of $\beta ^2 _{f_i, f_j}$, that is, the coherent 2-isomorphism of the braiding transformation (see \cite[Section 1.2]{leinster}). The coherence condition for these morphisms implies that the collection  $\{ \Bin{S} f_i, \epsilon _{S,T} \}$ satisfies  equations (\ref{phistu}) and (\ref{phits}). It is also automatic that for any $\{ \psi _i \}: \{ f_i \} \Rightarrow \{ g_i \}$, we get that $\{ \Bin{S} \psi _i\} $ is an allowed 2-morphism between $\{ \Bin{S} f_i, \Id \}$ and $\{ \Bin{S} g_i, \Id \}$.

This assignment gives a pseudofunctor with
$$i_n^2: i_n(\{ g_i\} )\ast i_n(\{ f_i\} ) \rightarrow = i_n(\{ g_i\ast f_i\} )$$
and
$$i_n^0: \I{i_n(\{ A_i\} )} \rightarrow i_n(\{ \I{A_i}\} )$$
given by the appropriate composition of instances of $\bp ^2$ and $\bp ^0$ respectively.

Clearly $p_n \circ i_n=Id_{\C ^{\times n}}$. We now construct a natural equivalence
$$\xi: Id_{\gC{n}}\rightarrow i_n \circ p_n.$$

Recall that a transformation $\eta$ between pseudofunctors $F,G: \mathcal{C} \rightarrow \mathcal{D}$ consists of a 1-morphism $\eta _A: \F A \rightarrow \G A$ for each $A\in Ob(\mathcal{C})$, and for every pair $A,B$, natural 2-isomorphisms
$$\eta ^2 _f: \G f \ast \eta _A \Rightarrow \eta _B \ast \F f.$$
The latter must satisfy some coherence conditions (see \cite[Section 1.2]{leinster}). A transformation is a natural equivalence if and only if the 1-morphism corresponding to each object is a 1-equivalence.

Hence, to construct the natural equivalence $\xi$, we need a 1-equivalence
$$\xi_{\{ A_S, a_{S,T}\}}: \{ A_S, a_{S,T} \} \rightarrow \{ \Bin{S}A_i, e_{S,T}\}$$
for every object $\{ A_S,a_{S,T}\}$ in $\gC{n}$.

Given the subset $S$, we define $a^S$ inductively as the composition:
$$A_S \xrightarrow{a_{j,S-j}} A_j \bp A_{S-j} \xrightarrow{\id{A_j}\bp a^{S-j}} A_j\bp \Bin{S-j}A_i=\Bin{S}A_i,$$
where $j$ is the smallest index in $S$.

Note that by conditions (\ref{astu}) and (\ref{ats}) on the $a_{S,T}$, the two compositions in the diagram below differ by a specified associativity 2-isomorphism:
$$\xymatrix{
A_{S\cup T} \ar[r]^-{a _{S,T}} \ar[d]_-{a^{S\cup T}}   \drtwocell<\omit>{<0>\quad \eta _{S,T}} &A_S\boxplus A_T \ar[d]^-{a^S\boxplus a^T} \\
\Bin{S\cup T}A_i \ar@<1ex>[r]_-{e_{S,T}}& **[r]\Bin{S}A_i\boxplus \Bin{T}A_i.
}$$

Since associativity isomorphisms are unique, $\{ a^S, \eta _{S,T}\}$ is a well-defined 1-morphism in $\gC{n}$. This will be the corresponding 1-morphism of the transformation $\xi$.

To complete the data of the transformation, for every pair of objects $\{ A _S, a_{S,T}\}$, $\{ A'_S, a'_{S,T} \}$ in $\gC{n}$ we need to provide a natural isomorphism $\xi ^2$, which on the component $\{ f_S,\phi _{S,T} \}$ is given by a 2-morphism
$$\xi^2(\{ f_S, \phi _{S,T} \} ): \{ \Bin{S}f_i, \epsilon _S,T \}\ast \{ a^S, \eta _{S,T} \} \Rightarrow \{ a'^S, \eta '_{S,T}\}\ast \{ f_S, \phi _{S,T}\}.$$

Given $S$, we define a 2-isomorphism in $\C$, $\phi ^S: (\Bin{S}f_i)\ast a^S \Rightarrow a'^S\ast  f_S$, inductively as the pasting diagram:
$$\entrymodifiers={+!!<0pt,\fontdimen22\textfont2>}
\xymatrixrowsep{4pc}\xymatrixcolsep{4pc}\xymatrix{
**[l]A_S \ar[r]^-{a _{j, S-j}} \ar[d]_-{f_S}   \drtwocell<\omit>{\qquad \phi _{j, S-j}} &A_j\bp A_{S-j} \ar[d]|(0.4){\quad f_{j}\bp f_{S-j}} \ar[r]^-{\I{A_j}\bp a'^{S-j}} \drtwocell<\omit>{\quad \qquad l^-1\ast r \bp \phi ^{S-j}} & **[r]\Bin{S} A_i \ar[d]^-{\Bin{S}f_i} \\
**[l]A'_S \ar[r]_-{a' _{j, S-j}} &A'_j\bp A'_{S-j} \ar[r]_-{\I{A'_j}\bp a'^{S-j}} & **[r]\Bin{S} A'_i ,
}$$
where $j$ is the smallest index in $S$. We need to show that $\{ \phi ^S\} _S$ gives a 2-morphism in $\gC{n}$, that is, it satisfies equation (\ref{bst}). This is done by induction on $|S\cup T|$ using pasting diagrams. We let $\xi ^2(\{ f_S, \phi _{S,T}\})=\{ \phi ^S \}$.

%
To show the naturality of $\xi ^2$, we need to show that
$$\entrymodifiers={+!!<0pt,\fontdimen22\textfont2>}
\xymatrixrowsep{4pc}\xymatrix{
A_S \ar[rr]^-{a^S} \dtwocell<7>_{g_S}^{f_S}{\psi _S} \drrtwocell<\omit>{\quad \phi ^S}& & \Bin{S} A_i \ar[d]^-{\Bin{S}f_i} \ar@{}[drr]|{=}
&& A_S \ar[rr]^-{a^S} \ar[d]_-{g_S} \drrtwocell<\omit>{<2>\gamma ^S \qquad \quad }& & \Bin{S} A_i \dtwocell<7>_{\Bin{S}g_i \qquad }^{\qquad \Bin{S}f_i}{\Bin{S} \psi _i}\\
A'_S\ar[rr]_-{a'^S} & & \Bin{S}A'_i
&&
A'_S \ar[rr]_-{a'^S} & & \Bin{S}A'_i.
}
$$

This follows by induction on $|S|$, using the inductive definition of $\phi ^S$ and equation (\ref{bst}). Since $\phi ^S$ is invertible, we get a natural isomorphism as wanted.

For the first and second axioms of a transformation \cite[Section 1.2]{leinster} we need to show
$$\entrymodifiers={+!!<0pt,\fontdimen22\textfont2>}
\xymatrix{
A_S \ar[d]_{f_S} \ar[rr]^-{a^S} \drtwocell<\omit>{\quad \phi ^S} &&**[r]\Bin{S}A_i \ar[dl]^{\Bin{S}f_i}\ar[dd]^{\Bin{S}g_i\ast f_i}
&&& A_S \ar[dd]_{g_S\ast f_S} \ar[r]^-{a^S} \ddrtwocell<\omit>{\qquad (\gamma \diamond \phi ) ^S} &**[r]\Bin{S}A_i \ar[dd]^{\Bin{S}g_i\ast f_i}\\
A'_S \ar[d]_{g_S} \ar[r]^-{a'^S} \drtwocell<\omit>{\quad \gamma ^S} &**[r]\Bin{S}A'_i \ar[dr]^{\Bin{S}g_i} &&=\\
A''_S \ar[rr]_-{a''^S}&&**[r]\Bin{S}A''_i
&&& A''_S \ar[r]_-{a''^S}&**[r]\Bin{S}A''_i
}
$$
and
$$\entrymodifiers={+!!<0pt,\fontdimen22\textfont2>}
\xymatrixcolsep{4pc}\xymatrixrowsep{4pc}\xymatrix{
A_S \ar[d]_{\I{A_S}} \ar[r]^-{a^S} \drtwocell<\omit>{<2>\iota ^S \qquad \quad} &**[r]\Bin{S}A_i \dtwocell<5>_{\Bin{S}\I{A_i}\qquad}^{\qquad\I{\Bin{S}A_i}} \ar@{}[drr]|{=}
&& A_S \ar[d]_{\I{A_S}} \ar[r]^-{a^S} \ar[dr]_-{a^S} \druppertwocell<\omit>{<-4>l} \drlowertwocell<\omit>{<4>r^{-1}} &**[r]\Bin{S}A_i \ar[d]^{\I{\Bin{S}A_i}}\\
A_S \ar[r]_-{a^S}&**[r]\Bin{S}A_i
&& A_S \ar[r]_-{a^S}&**[r]\Bin{S}A_i
.
}
$$

The first one is straightforward using induction on $|S|$ and the definition of $\gamma \diamond \phi$. The second one holds again by induction on $|S|$, the fact that $\iota$ is a composition of instances of $\bp ^0$, $r$ and $l$, and the coherence conditions of $\bp ^0$ with respect to $r$ and $l$.

Hence we have a natural equivalence between $Id_{\gC{n}}$ and $i_n \circ p_n$. We conclude that the bicategories $\gC{n}$ and $\C ^{\times n}$ are equivalent, making $\widehat{\C}$ into a special $\Gamma$-bicategory.

\section{The construction of $\widetilde{\D}$}\label{constructionfat}

In this section we give the alternative version of the $\Gamma$-category for a permutative category $\D$ enriched over simplicial categories.

We need a preliminary observation. Let $\un{Hom}(X,Y)$ denote the internal hom in simplicial spaces. If $X$ is (the nerve of) a simplicial category, then there is a unique composition of paths:
$$\odot: \un{Hom}(\Delta ^1, X)\underset{X}\times \un{Hom}(\Delta ^1, X) \rightarrow \un{Hom}(\Delta ^1, X).$$ The uniqueness of the filling condition for all inner horns implies the uniqueness of filling for the spine, which in turn proves the following useful lemma.

\begin{lem}\label{multicell}
Composition of paths is associative. In particular, there is a unique map
$$\un{Hom}(\Delta ^1, X)\underset{X}\times \cdots  \underset{X}\times\un{Hom}(\Delta ^1, X) \rightarrow \un{Hom}(\Delta ^1, X).$$
\end{lem}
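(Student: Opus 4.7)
The plan is to leverage the observation in the paragraph preceding the statement: if $X$ is the nerve of a simplicial category, then the spine inclusion
$$\on{Sp}^n \,=\, \Delta^1 \cup_{\Delta^0} \Delta^1 \cup_{\Delta^0} \cdots \cup_{\Delta^0} \Delta^1 \,\hookrightarrow\, \Delta^n$$
admits a unique filler when mapped into $X$. Equivalently, the restriction map
$$\un{Hom}(\Delta^n, X) \;\longrightarrow\; \un{Hom}(\on{Sp}^n, X) \;\cong\; \un{Hom}(\Delta^1, X)\underset{X}\times \cdots \underset{X}\times \un{Hom}(\Delta^1, X)$$
is an isomorphism of simplicial spaces, the last identification being the universal property of the iterated pushout defining $\on{Sp}^n$.

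Granting this, I would define the iterated composition as the composite of the inverse of the above isomorphism with restriction along the long edge $\Delta^1 \hookrightarrow \Delta^n$ sending $[0,1]$ to $[0,n]$. By construction this map is unique: any candidate $n$-fold composition is determined by the $n$-simplex it selects as a witness, and that $n$-simplex is pinned down uniquely by the spine of composable $1$-simplices.

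For associativity, I would compare the iterated ternary composition with the two binary composites $\odot \circ (\odot \times \text{id})$ and $\odot \circ (\text{id} \times \odot)$: both factor through $\un{Hom}(\Delta^3, X)$, since each produces, from the spine of three composable edges, a $3$-simplex together with its long edge. By the spine isomorphism for $n = 3$ this $3$-simplex is unique, so the two binary composites agree with the canonical ternary one; the general associativity statement then reduces to this base case.

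The main obstacle is justifying the spine isomorphism in the present simplicial-space setting. One performs the standard inductive decomposition of $\Delta^n$ as an iterated pushout along inner horns $\Lambda^n_k$ with $0 < k < n$: unique filling for each inner horn propagates along a tower $\on{Sp}^n = Y_2 \subset Y_3 \subset \cdots \subset Y_n = \Delta^n$ to yield the isomorphism on mapping spaces. Because each simplicial level of $X$ is a nerve, the argument applies levelwise and hence passes to the internal hom $\un{Hom}$, giving both the uniqueness of the $n$-fold map and the associativity of $\odot$.
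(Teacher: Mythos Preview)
Your proposal is correct and follows essentially the same approach as the paper: the paper's entire argument is the single sentence preceding the lemma, namely that unique inner-horn filling implies unique spine filling, which in turn yields the lemma. You have simply unpacked this sketch, supplying the explicit definition of the $n$-fold composite via the long edge of $\Delta^n$, the associativity check via the unique $3$-simplex, and the inductive horn-filling argument for the spine isomorphism---all of which the paper leaves implicit.
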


We will build a $\Gamma$-category $\widetilde{\D}$ which will turn out to be equivalent to the standard $\Gamma$-category construction $\widehat{\D}$. For the construction of $\fat{n}$ we mimic that of bicategories in Section \ref{proofgamma}.

\begin{cons}
  We build the $\Gamma$-category $\widetilde{\D}$ as follows. The objects are given by $\{ A_S, a _{S,T}\}$, with $S$ and $T$ as above; $A_S \in Ob \D$ and $a_{S,T} : A_{S\cup T}\rightarrow A_S \boxplus A_T$ is an invertible morphism, that is, a 0-simplex in $\D(A_{S\cup T}, A_S\bp A_T)$. We require conditions \ref{axiomobj1}-\ref{axiomobj4} in the proof of Theorem \ref{gamma} to hold. We note that the objects in $\widetilde{\D}(\underline{n})$ are the same as the objects in the usual $\Gamma$-category construction $\widehat{\D}(\underline{n})$.

Given two objects $\ob {A}{a}$ and $\ob {B}{b}$, the simplicial space of morphisms between them is defined as a subspace of
$$\prod _S \D(A_S, B_S) \times \prod _{S,T}\Hom{A_{\st}}{\bst}.$$

Let $X$ be defined by the pullback
\begin{equation}\label{pullback}
\xymatrix{
X \ar[r]\ar[d]& **[r]\prod _{S,T} ^{\phantom{X}} \Hom{A_{\st}}{\bst}\ar[d]^-{(d^{1\ast}, d^{0\ast})} \\
\prod _S \D(A_S, B_S) \ar[r]_-{\prod _{S,T}(a^{\ast},b_{\ast})}& **[r]\prod _{S,T}\D(A_{\st},\bst)^{\times 2} ,
}
\end{equation}
where the lower horizontal map corresponds to $(a^{\ast} _{S,T}, (b_{S,T}) _{\ast})$ for the $S,T$ component.

Let $Y$ be the equalizer
\begin{equation}\label{twist}
\entrymodifiers={+!!<0pt,\fontdimen22\textfont2>}
\xymatrix{
Y \ar[r] &X \ar@<1ex>[r]^-{p_1} \ar@<-1ex>[r]_-{p_2} &\prod_{S,T} \Hom{A_{\st}}{\bst}
},
\end{equation}
where $p_1$ is just the projection and $p_2$ is the projection composed with $\tau _{\ast}$. This reproduces condition (\ref{phits}) in this setting.

We define $\widetilde{\D}(\underline{n})(\ob{A}{a},\ob{B}{b}):=Z,$ where $Z$ is the equalizer
\begin{equation}\label{tripart}
\entrymodifiers={+!!<0pt,\fontdimen22\textfont2>}
\xymatrix{
Z \ar[r] &Y \ar@<1ex>[r]^-{q_1} \ar@<-1ex>[r]_-{q_2} &\prod_{S,T,U} \Hom{A_{\stu}}{\bstu}
},
\end{equation}
where $q_1$ and $q_2$ are as defined below. This construction mimics the condition of equation (\ref{phistu}).

Let $f$ be the following composition
\begin{equation*}
 \xymatrix{
  \D(A_U,B_U)\times \Hom{A_{S\cup T}}{\bst}\ar[d]_{\tilde{\op} \times \Id}\\
  \txt{$\Homdd{A_{\st}}{\bst}{A_{\st} \op  A_U}{\bstu}$\\
   $\times$\\
   $\Hom{A_{\st}}{\bst}$} \ar[d]_{\circ}\\
  \Hom{A_{\st}\! \op \! A_U}{\bstu}\ar[d]_{a^{\ast}}\\
  \Hom{A_{\stu}}{\bstu},
 }
\end{equation*}
where $\tilde{\op}, \circ, a$ denote the adjoint of $\op$, composition of maps, $a_{\st, U}$, respectively.

We also have the map
\begin{equation*}
 \xymatrix{
  \Hom{A_{\stu}}{B_{\st}\op B_U}\ar[d]_{b_{\ast}}\\
  \Hom{A_{\stu}}{\bstu},
 }
\end{equation*}

Note that the pullback conditions on $X$ imply that $d^{0\ast} f$ is equal to $d^{1\ast}b_{\ast}$, thus, putting the two maps $f$ and $b_{\ast}$ together we get a map from $Y$ to
$$\Hom{A_{\stu}}{\bstu}\!\!\!\!\!\!\!\!\!\!\!\!\!\!\!\!\underset{\D(\!{A^{\phantom{X}}_{\stu}},{\bstu}\!)}\times \!\!\!\!\!\!\!\!\!\!\!\!\!\!\!\!\Hom{A_{\stu}}{\bstu}.$$

We can further compose with $\odot$, getting a map to $\Hom{A_{\stu}}{\bstu}$.

Finally, we take the product over all $(S,T,U)$ to get
$$q_1: Y\longrightarrow \prod _{S,T,U} \Hom{A_{\stu}}{\bstu}.$$

We define $q_2$ similarly, starting with
$$\D(A_S,B_S)\times \Hom{A_{\tu}}{\btu}$$
instead.

We now show that the collection of objects described above with the simplicial spaces of morphisms $\widetilde{\D}(\un{n})$ form a category enriched over simplicial spaces.

Given objects $A=\ob{A}{a}$, $B=\ob{B}{b}$, $C=\ob{C}{c}$, we define a composition map
$$\fat{n}(B,C)\times \fat{n}(A,B)\rightarrow \fat{n}(A,C)$$
as follows: On one hand, we have a map given by the composition maps in $\D$:
$$\prod _S\D(A_S,B_S)\times \prod_S\D(B_S,C_S) \overset{\bullet}\longrightarrow \prod _S\D(A_S,C_S).$$

Given $(S,T)$, let $g$ be the following composition
\begin{equation*}
 \xymatrix{
  \D(B_{S},C_{S})\times \D(B_{T},C_{T})\times\Hom{A_{\st}}{\bst}\ar[d]_{\op \times \Id}\\
  \D({\bst},\cst)\times \Hom{A_{\st}}{\bst}\ar[d]_{\tilde{\bullet}\times \Id}\\
  \txt{$\Homdd{A_{\st}}{\bst}{A_{\st}}{\cst}$\\
  $\times$\\
  $\Hom{A_{\st}}{\bst}$}\ar[d]_{\circ}\\
  \Hom{A_{\st}}{\cst}.
 }
\end{equation*}

Similarly, we let $h$ be
\begin{equation*}
 \xymatrix{
  \D(A_{\st},B_{\st})\times \Hom{B_{\st}}{\cst}\ar[d]_{\tilde{\bullet}\times \Id}\\
  \txt{$\Homdd{B_{\st}}{\cst}{A_{\st}}{\cst}$\\
  $\times$\\
   $\Hom{B_{\st}}{\cst}$}\ar[d]_{\circ}\\
  \Hom{A_{\st}}{\cst}.
 }
\end{equation*}

One can check that when we restrict the source of these maps to the pullback diagram (\ref{pullback}),  the condition $d^{0\ast}g=d^{1\ast}h$ is satisfied. Hence we can put $g$ and $h$ together to get a map to
$$\Hom{A_{\st}}{\cst}\underset{\D({A_{\st}},{\cst})}\times \Hom{A_{\st}}{\cst}$$
that we can later compose with $\odot$ we get a map to
$$\Hom{A_{\st}}{\cst}.$$

Taking the product of these maps and the composition maps over $S,T$, we get a map
$$\fat{n}(B,C)\times \fat{n}(A,B) \overset{k}\longrightarrow \prod _S \D(A_S,C_S) \times \prod _{S,T} \Hom{A_{\st}}{\cst}.$$
We want to show now that the image of $k$ is contained in $\fat{n}(A,C)$.

On the $\Hom{A_{\st}}{\cst}$-component, $d^{1\ast}k=d^{1\ast}g$, which is equal to $a^{\ast}$ on $\D(A_S,C_S)\! \times \!\D(A_T,C_T)$ by the pullback (\ref{pullback}) on the space $\fat{n}(A,B)$.
%

Similarly for $d^{0\ast}k$, we show it is the same as $c_{S,T\ast}$, thus showing we land in the pullback (\ref{pullback}) for $\fat{n}(A,C)$.

To show that conditions (\ref{twist}) and (\ref{tripart}) hold we use the fact that they hold for $\fat{n}(A,B)$ and $\fat{n}(B,C)$ together with Lemma \ref{multicell}. We conclude that the composition is well defined. The fact that composition in $\D$ is associative and Lemma \ref{multicell} imply that the composition is associative.

The identity of $\ob{A}{a}$ is the 0-simplex given by the collections of $id _{A_S}$ together with the constant path at $a_{S,T} \in \D(A_{\st}, A_S\op A_T)$.

We thus get a category $\fat{n}$. We can extend this construction to a $\Gamma$-category in the usual way. Let $\theta : \underline{n} \rightarrow {m}$ be a morphism in $\Gamma$. We construct the functor
$$\theta _{\ast}: \fat{n}\longrightarrow \fat{m}$$
as follows.

We send the object $\ob{A}{a}$ of $\fat{n}$ to the object $\ob{A^{\theta}}{a^{\theta}}$ of $\fat{m}$, where $A_S^{\theta}=A_{\theta^{-1}S}$ and $a_{S,T}^{\theta}=a_{\theta^{-1}S, \theta^{-1}T}$.

For morphisms we construct a map
$$\fat{n}(A,B)\longrightarrow \fat{m}(A^\theta, B^\theta)$$
sending the projections of the $\theta^{-1}S$, $(\theta^{-1}S,\theta^{-1}T)$-components to the $S$, $(S,T)$-components, respectively.

\end{cons}

Consider the following diagram of functors
  \begin{equation*}
   \xymatrix{
    \widetilde{\D}(\underline{n})  \ar[rr]^{q}&&\D^{\times n} \ar@/^1pc/ @{.>}[dl]^{j}\\
    &\widehat{\D}(\underline{n}) \ar[ul]^{i} \ar[ur]^{p}
   }
  \end{equation*}
Here $q$ and $p$ are the projections onto the $(\{1\},\cdots, \{n\})$-components and $i$ is the inclusion. It is clear that $qi=p$. On the other hand, $j$ is the usual inverse equivalence for $p$; we will describe $j$ explicitly below.

\begin{prop}\label{ddd}
 The functors $i$ and $q$ are weak equivalences of categories (that is, they induce equivalences at the level of classifying spaces).
\end{prop}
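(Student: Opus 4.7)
My plan is to prove that $q$ is a weak equivalence of categories by constructing an explicit inverse functor together with a natural isomorphism, and then to deduce that $i$ is a weak equivalence from the identity $q \circ i = p$ together with the known fact that $p: \widehat{\D}(\underline{n}) \to \D^{\times n}$ is an equivalence in the standard $\Gamma$-category construction for a permutative category.

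First I would define $j: \D^{\times n} \to \widetilde{\D}(\underline{n})$ as the composition $i \circ j'$, where $j'$ is the usual inverse equivalence to $p$, sending $(A_1, \ldots, A_n) \mapsto \{\Bin{S} A_i, e_{S,T}\}$ with $e_{S,T}$ the unique composition of symmetries that reorders the sum. On morphisms, $(f_i)$ goes to the data with $S$-component $\Bin{S} f_i$ and all paths constant; no nontrivial path is needed because in a permutative category the analogue of equation (\ref{phistu}) holds on the nose. By inspection, $q \circ j = \Id_{\D^{\times n}}$ strictly.

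The heart of the argument is to construct a natural isomorphism $\xi: \Id_{\widetilde{\D}(\underline{n})} \to j \circ q$, mimicking the transformation built in the proof of Theorem \ref{gamma}. For each object $\{A_S, a_{S,T}\}$, let $a^S : A_S \to \Bin{S} A_i$ be defined inductively as in Section \ref{proofgamma}, using the smallest index in $S$. The coherence conditions (\ref{astu}) and (\ref{ats}) on $\{a_{S,T}\}$ together with Lemma \ref{multicell} produce canonical 1-simplices $\eta_{S,T}$ in $\underline{Hom}(\Delta^1, \D(A_{S \cup T}, \Bin{S} A_i \boxplus \Bin{T} A_i))$ connecting $e_{S,T} \circ a^{S \cup T}$ to $(a^S \boxplus a^T) \circ a_{S,T}$. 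Setting the component $\xi_{\{A_S, a_{S,T}\}} = \{a^S, \eta_{S,T}\}$ yields the desired morphism in $\widetilde{\D}(\underline{n})$, and it is an isomorphism since each $a_{S,T}$, and hence each $a^S$, is invertible in $\D$.

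The main obstacle will be verifying that $\{a^S, \eta_{S,T}\}$ satisfies the coherence conditions encoded by the equalizers (\ref{twist}) and (\ref{tripart}), and that $\xi$ is natural with respect to arbitrary morphisms $\{f_S, \phi_{S,T}\}$ in $\widetilde{\D}(\underline{n})$. Both statements are proved by induction on $|S|$ (respectively on $|S \cup T|$ and $|S \cup T \cup U|$), systematically replacing the pasting-diagram manipulations of Section \ref{proofgamma} with compositions of 1-simplices, whose uniqueness and associativity is guaranteed by Lemma \ref{multicell}. Once $\xi$ is established as a natural isomorphism, the induced map $|N\xi|$ gives a homotopy from $|N(j \circ q)|$ to the identity on $|N\widetilde{\D}(\underline{n})|$; combined with the strict equality $q \circ j = \Id$, this shows $q$ induces a homotopy equivalence on classifying spaces. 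Finally, since $p = q \circ i$ is an equivalence, the 2-out-of-3 property for weak equivalences yields that $i$ is also a weak equivalence.
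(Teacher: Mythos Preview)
Your approach has a genuine gap: the putative natural isomorphism $\xi:\Id_{\widetilde{\D}(\underline{n})}\to j\circ q$ is \emph{not} natural. For a morphism $\{f_S,\phi_{S,T}\}$ in $\widetilde{\D}(\underline{n})(A,B)$, naturality would require the equality of $0$-simplices
\[
\bigl(\Bin{S} f_i\bigr)\circ a^S \;=\; b^S\circ f_S
\]
in $\D(A_S,\Bin{S}B_i)$ for every $S$. But these two $0$-simplices are only connected by a $1$-simplex built from the $\phi_{S,T}$'s (the analogue of the $2$-isomorphism $\phi^S$ from Section~\ref{proofgamma}); they are not equal on the nose. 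In the bicategorical argument of Section~\ref{proofgamma} this is harmless because $\xi$ there is a natural \emph{equivalence}, with the $\phi^S$ serving as the naturality $2$-cells. In $\widetilde{\D}(\underline{n})$, however, you are in a $1$-category (albeit simplicially enriched), so a natural transformation must have strictly commuting squares, and the standard ``natural isomorphism $\Rightarrow$ homotopy on nerves'' argument you invoke does not apply.

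The paper deals with exactly this obstruction by abandoning the search for a natural transformation and instead working object-pair by object-pair on the morphism simplicial spaces. Writing $Z=\widetilde{\D}(\underline{n})(A,B)$ and $f=i\lambda j q$ (where $\lambda$ is conjugation by the isomorphism $\{a^S\}$ inside $\widehat{\D}(\underline{n})$), the paper constructs an explicit simplicial homotopy $\phi:Z\to\underline{Hom}(\Delta^1,Z)$ from $f$ to $\Id_Z$. The path data $\phi_{S,T}$ that obstruct your naturality are precisely what get assembled, via Lemma~\ref{multicell}, into the components of this homotopy. Since $i$ is the identity on objects and $q$ is surjective on objects, this morphism-level homotopy equivalence suffices. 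Your overall outline (define $j$, use $qj=\Id$, then $2$-out-of-$3$) is reasonable, but the middle step must be a simplicial homotopy on hom-spaces, not a natural isomorphism of functors.
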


\begin{cor}
 The $\Gamma$-category $\widetilde{\D}$ is special and it is levelwise equivalent to $\widehat{\D}$, via the map $i$.
\end{cor}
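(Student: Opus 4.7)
The plan is to deduce both assertions directly from Proposition~\ref{ddd}. For the first, observe that the functor $q: \widetilde{\D}(\underline{n}) \to \D^{\times n}$ in the diagram preceding Proposition~\ref{ddd} is exactly the Segal assembly map $P_n$: by construction, projecting the data $\ob{A}{a}$ onto its $(\{1\},\dots,\{n\})$-indexed components is the same as applying the maps $i_k$. Since Proposition~\ref{ddd} asserts that $q$ is a weak equivalence of categories, passing to classifying spaces gives that $|N\widetilde{\D}(\underline{n})| \to |N\D|^{\times n}$ is a weak equivalence. This is precisely the condition for $|N\widetilde{\D}|$ to be a special $\Gamma$-space, so $\widetilde{\D}$ is special.

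For the levelwise equivalence claim, I would upgrade the weak equivalence $i: \widehat{\D}(\underline{n}) \to \widetilde{\D}(\underline{n})$ of Proposition~\ref{ddd} to a natural transformation of functors $Fin_\ast \to Cat$. The key observation is that in both constructions, a morphism $\theta: \underline{n} \to \underline{m}$ acts by the same combinatorial recipe: on objects, $A_S^\theta = A_{\theta^{-1}S}$ and $a_{S,T}^\theta = a_{\theta^{-1}S,\theta^{-1}T}$; on morphisms, one simply reindexes the components along $\theta^{-1}$. The inclusion $i$ sends an object of $\widehat{\D}(\underline{n})$ to the same collection viewed in $\widetilde{\D}(\underline{n})$, and sends each morphism to the 0-simplex together with the constant paths at the required invertible morphisms. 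This assignment manifestly commutes with reindexing, so $i \circ \theta_\ast = \theta_\ast \circ i$ on the nose, showing that $i$ is a map of $\Gamma$-categories. Proposition~\ref{ddd} then gives that it is a levelwise weak equivalence.

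There is no substantive obstacle here: the nontrivial content is in Proposition~\ref{ddd}, and the corollary is a formal consequence. The only verification required is the naturality of $i$ in $\underline{n} \in Fin_\ast$, which reduces to unravelling the two definitions of $\theta_\ast$ and noting that both are given by the same pullback along $\theta^{-1}$ on indexing subsets.
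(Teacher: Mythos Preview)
Your proposal is correct and matches the paper's intent: the paper gives no separate proof of the corollary, treating it as immediate from Proposition~\ref{ddd}. You are in fact slightly more thorough than the paper, since you make explicit the naturality of $i$ with respect to $\theta_\ast$ (needed to view $i$ as a map of $\Gamma$-categories), which the paper leaves implicit.
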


\begin{proof}[Proof of Proposition \ref{ddd}]
 The proof will proceed as follows. Since $i$ is the identity on objects and $q$ is surjective on objects, it is enough to prove that both $i$ and $q$ induce weak equivalences for the simplicial spaces of morphisms.

 We will first recall the definition of $j$ in \cite[Lemma 2.2]{SS}.

Given an object $(A_1,\cdots, A_n)$ in $\D ^{\times n}$, we let
 $$j(A_1,\cdots , A_n)=\{ \bigoplus _{i\in S} A_i, e_{S,T} \}$$
 where the sum is taken in the order of the indices of $S \subset \un{n}$. The morphism $e_{S,T}$ is the uniquely determined isomorphism from $A_{\st}$ to $A_S\op A_T$ given by composition of instances of $\tau$.

 For morphisms, we let
 $$j(f_1, \cdots f_n)=\{ \bigoplus _{i\in S} f_i \}.$$

 We then have that $pj=\Id$ and that there is a natural isomorphism $\lambda: \Id \rightarrow jp$ given on the object $\ob{A}{a}$ by the composition
 $$A_S \rightarrow A_{\{i_1\}} \op A_{S-i_1} \rightarrow \cdots A_{\{i_1\}} \op \cdots \op A_{\{i_k\}}$$
 of the corresponding $a$'s.

 Given an object $A=\ob{A}{a}$ in $\thin{n}$ (and thus, in $\fat{n}$), let $\dot{A}=\{ \bigoplus _{i\in S} A_i, e^A _{S,T} \} $ denote its image under $jq$.

 Given a pair $(A,B)$ of objects in $\thin{n}$, consider the following diagram:
 \begin{equation*}
  \xymatrix{
   \thin{n}(A,B) \ar[r]^-i &\fat{n}(A,B)\ar[d]^-q\\
   \thin{n}(\dot{A},\dot{B})\ar[u]^-{\lambda}_-{\cong} & \prod _i \D(A_i,B_i)\ar[l]^-j_-{\cong}
  }
\end{equation*}
The map $\lambda$ above is gotten by pre- and post-composition with the natural isomorphism $\lambda$ already defined. We note that $\lambda jqi=\Id$ and $qi\lambda j=\Id$.

Thus, if we show that $i\lambda jq $ is homotopic to the identity we will have shown that $\lambda j q$ and  $i\lambda j$ are homotopy inverses for $i$ and $q$ respectively, giving us the result we want.

To build a homotopy from $f=i\lambda jq $ to the identity, we show that there exists a map $\phi$ making the diagram below commute. For ease of notation we let $Z=\fat{n}(A, B)$.

\begin{equation*}
 \xymatrix{
  & Z\\
  Z \ar[ur]^{f} \ar@{-->}[r]^{\phi} \ar[dr]_{\Id} & **[r]\underline{Hom}(\Delta ^1, Z) \ar[u]_{d^{1\ast}} \ar[d]^{d^{0\ast}}\\
  &Z
 }
\end{equation*}

More precisely, we will construct a map
\begin{equation*}
 \xymatrix{
  \prod _S \D\!(A_S, B_S) \times \prod _{S,T} \Hom{A_{\st}}{\bst} \ar[d]\\
  \prod_S \Hom{A_S}{B_S} \times \prod _{S,T}\underline{Hom}(\Delta ^1 \times \Delta ^1, \D\!(A_{\st},\bst)),
 } 
\end{equation*}
and show that when restricted to $Z$ it lands on $\underline{Hom}(\Delta ^1, Z)$.

We will first construct the map $\phi _S$ to the component $\Hom{A_S}{B_S}$. Say $S=\{ i_1< \cdots < i_k\}$. Let $S_j=\{i_j, \cdots ,i_k\}$.

We consider the map $g_j$
\begin{equation*}
 \xymatrix{
  \prod _ {l=1}^{j-1}\D (A_{i_l},B_{i_l})\times \Hom{A_{S_j}}{B_{i_j}\op B_{S_{j+1}}} \ar[d]_{\oplus}\\
  \D(\bigoplus _{l=1}^k A_{i_l}, \bigoplus _{l=1}^k B_{i_l}) \times \Hom{A_{S_j}}{B_{i_j}\op B_{S_{j+1}}} \ar[d]_{\tilde{\op}}\\
  \Hom{A_{i_1}\op \cdots \op A_{i_{j-1}}\op A_{S_j}}{B_{i_1}\op \cdots \op B_{i_j}\op B_{S_{j+1}}}\ar[d]_{(a^{\ast})_{\ast}((b^{-1})_{\ast})_{\ast}}\\
  \Hom{A_S}{B_S}.
 }
\end{equation*}

Here $a$ is the composition of the instances of $a_{S,T}$ that take $A_S$ to $A_{i_1}\op \cdots \op A_{i_{j-1}}\op A_{S_j}$, and similarly for $b$ taking $B_S$ into $B_{i_1}\op \cdots \op B_{i_j}\op B_{S_{j+1}}$.

When we restrict to $Z$, $d^1 g_j=d^0 g_{j+1}$ and thus we get a map into
$$\underline{Hom}(Spine _{|S|},\D(A_S,B_S)).$$

By Lemma \ref{multicell}, this extends to $\Hom{A_S}{B_S}.$ The result is the map $\phi _S$.

From the construction it is clear that $d^1 \phi _S =f$ and $d^0\phi _S=\Id$ on this component.

Now, we look at the component $\underline{Hom}(\Delta ^1 \times \Delta ^1, \D(A_{\st},\bst))$.

Let $g$ be the composition
\begin{equation*}
 \xymatrix{
  Z \ar[d]_{\phi _S \times \phi _T}\\
  \Hom{A_S}{B_S}\times \Hom{A_T}{B_T} \ar[d]_{\op _{\ast}}\\
  \Hom{A_S\op A_T}{\bst}\ar[d]_{(a^{\ast})_{\ast}}\\
  \Hom{A_{\st}}{\bst}.
 }
\end{equation*}

Note that $g$ and the projection $Z\rightarrow \Hom{A_{\st}}{\bst}$ give composable paths. By Lemma \ref{multicell} and the given conditions on $Z$, the composed path is equal to the map $h$:
\begin{equation*}
 \xymatrix{
  Z\ar[d]_{\phi _{\st}}\\
  \Hom{A_{\st}}{B_{\st}}\ar[d]_{b_{\ast}}\\
  \Hom{A_{\st}}{\bst}.
 }
\end{equation*}

We can put these maps together and think of them as giving a map into
$$\underline{Hom}(\Delta ^1 \times \Delta ^1, \D(A_{\st},\bst)),$$
since the relevant paths are equal:
$$\xymatrix{
\bullet \ar@{-}[r] |-{\SelectTips{cm}{}\object@{>}}^-{proj}  & \bullet \\
\bullet \ar@{-}[r] |-{\SelectTips{cm}{}\object@{>}}_-{id} \ar@{-}[u] |-{\SelectTips{cm}{}\object@{>}}^-{g}
& \bullet \ar@{-}[u] |-{\SelectTips{cm}{}\object@{>}}_-{h}
}$$

Using the conditions on $Z$, it is easy to check that this construction yields the desired map
$$Z \rightarrow \un{Hom}(\Delta ^1, Z)$$
which restricts to the identity and $f$ at each end and gives the homotopy we wanted.
\end{proof}

\section{Proof of Theorem \ref{mton}}\label{proofmton}

In this section we prove Theorem \ref{mton} which relates the $\Gamma$-bicategory construction of Theorem \ref{gamma} and the construction of $\widetilde{\D}$ from the previous section.

\begin{proof}[Proof of Theorem \ref{mton}]
  We will build a map of bisimplicial spaces from the levelwise nerve of the simplicial category $S\gC{n}$ to $N\gD{n}$. Since $N\C(A,B)$ maps into $\D(A,B)$ we can think of 1-morphisms in $\C$ as morphisms in $\D$. Thus we can think of objects in $\gC{n}$ as objects in $\gD{n}$.

 The $(0,-)$ simplicial spaces for $NS\gC{n}$ and $N\gD{n}$ are given by the objects of $\gC{n}$ and $\gD{n}$, respectively. The desired map at the $(0,-)$ level is obtained by the identification of objects of $\gC{n}$ as objects of $\gD{n}$.

 Recall that a 1-morphism in $\gC{n}$ is given by a collection $\ob{f}{\phi}$ of 1-morphisms and 2-morphisms in $\Mod$. By considering the maps $N\C(A,B)\rightarrow \D(A,B)$, we can think of $\phi _{S,T}$ as a 0-simplex in $\Hom{A_{\st}}{\bst}$. As noted throughout the construction,  the restrictions on $\gD{n}(A,B)$ reflect the coherence axioms for $\ob{f}{\phi}$, so in general, we can think of a 1-morphism in $\gC{n}$ as a 0-simplex in $\gD{n}(A,B)$. Similarly, we can think of a 2-morphism in $\gC{n}$ as a 1-simplex.

 We can thus construct a map
 $$NS\gC{n}_{p,q} \rightarrow N\gD{n}_{p,q}$$
 as follows.

Recall that a $(p,q)$-simplex in $NS\gC{n}$ is given by a collection $\{ A_i \} _{i=0} ^p $ of objects in $\gC{n}$, diagrams of the form
\begin{equation*}
\xymatrix{
& A_j \ar[dr]^{f_{jk}^l} &\\
A_i \ar[ur]^{f_{ij}^l} \ar[rr]_{f_{ik}^l}\rrtwocell<\omit>{<-2.5>\quad \varphi _{ijk}^l}  &&A_k,
}
\text{\quad for all }0\leq i < j < k \leq p, 0\leq l \leq q
\end{equation*}
 subject to the coherence conditions in (\ref{conditions}), together with coherent 2-morphisms $f^{l}_{ij}\Rightarrow {f^{l+1}_{ij}}$.

 We can map this to
 $$\coprod _{A_0, \dots, A_p} \gD{n}(A_0, A_1) \times \cdots \times \gD{n}(A_{p-1}, A_p)$$
 by projecting the $(i,i+1)$-entries and using the identification of 1-morphisms and 2-morphisms in $\gC{n}$ as 0-simplices and 1-simplices of $\gD{n}$ described above.

 It is clear that this map extends to a map of $\Gamma$-spaces.
\end{proof}


\appendix\section{Classifying spaces of bicategories}\label{classifying}

Categories are closely related to spaces through the classifying space construction. To every category we can assign a space. This assignment gives a functor that is part of a Quillen equivalence between Thomason's model structure on the category of small categories \cite{thomcat} and the usual model structure in the category of topological spaces.

The same can be done with bicategories. In fact, there are many distinct constructions of the classifying space of a bicategory (\cite{CCG}). All these constructions give equivalent spaces, in the non-enriched case. Here we  describe the version of the nerve that was used in the preceding sections.

Lack and Paoli \cite{LP} introduce a version of a nerve of non-enriched bicategories that gives rise to a simplicial object in the category of small (non-enriched) categories. The construction can be extended to the enriched case, giving a simplicial object in $Cat$, where $Cat$ is the category of small categories enriched over simplicial sets. This construction is closely related to the bar construction for monoidal categories defined in \cite{BDR}, as we pointed out. This nerve is called \emph{2-nerve} in \cite{LP} and \emph{Segal nerve} in \cite{CCG}.

\begin{defn}
 Let $\mathcal{C}$ be a bicategory. The \emph{Segal nerve} $\bN\mathcal{C}$ is the simplicial object in $Cat$ given by normal homomorphisms, that is,
 $$\bN_n\mathcal{C}= \underline{NorHom}([n],\mathcal{C}),$$
 where the objects are normal pseudofunctors and the morphisms are \emph{icons}.
\end{defn}

Normal pseudofunctors are those for which the identity natural isomorphism is the identity. An \emph{icon} (Identity Component Oplax Natural transformation) is an oplax natural transformation (see \cite[Section 1.2]{leinster}) such that the map $\eta _A: \F A \rightarrow \G A$ is the identity, so in particular, we require that $\F A=\G A$. We now unravel the definition above.

An object of $\bN_n \mathcal{C}$ is given then by a collection of diagrams
\begin{equation}\label{simplices}
\xymatrix{
& C_j \ar[dr]^{f_{jk}} &\\
C_i \ar[ur]^{f_{ij}} \ar[rr]_{f_{ik}}\rrtwocell<\omit>{<-2.5>\quad \varphi _{ijk}}  &&C_k,
}
\text{\quad for all }0\leq i < j < k \leq n,
\end{equation}
where $\varphi _{ijk}$ is an invertible 2-morphism. This collection must satisfy the following coherence condition for all $0\leq i <j<k<l \leq n$:

\begin{equation}\label{conditions}
\xymatrixrowsep{.5pc}\xymatrixcolsep{.5pc}\xymatrix{
   &&C_j \ar[ddrr]& &  & &  & & & &C_j \ar[ddrr] \ar[dddd]\\
  &&\ddrtwocell<\omit>{<3>\quad \varphi _{ikl}} &&&&&&\ddrrtwocell<\omit>{\quad \varphi _{ijl}}&&\ddtwocell<\omit>{<-2.5>\varphi _{jkl}}& \\
C_i \ar[uurr] \ar[ddrr] \ar[rrrr] &\rrtwocell\omit{<-2.5>\quad \varphi _{ijk}} & &  &C_k \ar[ddll]& &= &&C_i \ar[uurr] \ar[ddrr]&&&   &C_k . \ar[ddll]\\
&&&&&&&&&&&\\
   &&C_l& &  & &  & &&  &C_l\\
}
\end{equation}

Given objects $\{ C_i, f _{ij}, \varphi _{ijk}\}$ and $\{ C_i, f' _{ij}, \varphi ' _{ijk}\}$ (note that the collections of objects are equal), a morphism between them is given by a collection of 2-morphisms $\eta _{ij}: f_{ij}\Rightarrow f'_{ij}$ for $i\leq j$, such that some coherence conditions (\cite[Eq. (44)]{CCG}) are satisfied.

\begin{rmk}\label{barsegal}
We note that the bar construction for monoidal categories of \cite{BDR} is equal to the Segal nerve. More precisely, if $\fM$ is a monoidal category,  then the simplicial category $B\fM$  of \cite{BDR} is equal to $\bN\Sigma \fM$ (with a possible reordering of the indices).
\end{rmk}

The Segal nerve is functorial with respect to normal pseudofunctors. It is the case that any pseudofunctor can be normalized (\cite[Prop. 5.2]{LP}).

It is clear from the definition that the Segal nerve preserves products.

\begin{defn}
 Let $\mathcal{C}$ be a bicategory. The \emph{classifying space} of $\mathcal{C}$ is the realization $|\bN\mathcal{C}|$.
\end{defn}

Let $\F, \G: \mathcal{C} \rightarrow \mathcal{D}$ be pseudofunctors, and $\eta : \F \rightarrow \G$ a transformation. As pointed out in the proof
of \cite[Prop. 7.1]{CCG}, these data gives rise to a pseudofunctor
$$\fH : \mathcal{C} \times \mathbf{1} \rightarrow \mathcal{D}$$
that restricts to $\F$ and $\G$ at $0$ and $1$. This pseudofunctor can be normalized, yielding the following result:

\begin{prop}\label{transhomo}
A transformation between pseudofunctors $\F, \G: \mathcal{C} \rightarrow \mathcal{D}$ gives rise to a homotopy between the maps
$$|\bN \F |, |\bN \G |: |\bN \mathcal{C} |\rightarrow |\bN\mathcal{D} |.$$
\end{prop}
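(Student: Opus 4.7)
The plan is to use the hint provided just before the statement: the transformation $\eta : \F \rightarrow \G$ should be repackaged as a pseudofunctor out of $\mathcal{C} \times \mathbf{1}$, and then fed through the nerve machinery to produce a homotopy. Let $\mathbf{1}$ denote the category with two objects $0,1$ and one non-identity 1-morphism $0 \to 1$, viewed as a locally discrete bicategory.

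First, I would construct the pseudofunctor $\fH : \mathcal{C} \times \mathbf{1} \rightarrow \mathcal{D}$ explicitly. On objects, set $\fH(A,0) = \F A$ and $\fH(A,1) = \G A$. On 1-morphisms $(f, \text{id}_0)$ and $(f, \text{id}_1)$ send them to $\F f$ and $\G f$ respectively; on 1-morphisms of the form $(f, 0\to 1)$ use the composite $\G f \ast \eta _A = \eta _B \ast \F f$ (up to the naturality 2-isomorphism $\eta^2_f$). The functoriality constraints $\fH^2$ and $\fH^0$ come from those of $\F$, $\G$, together with $\eta^2$ in the mixed cases, and their coherence amounts to the standard transformation axioms. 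This pseudofunctor visibly restricts to $\F$ at $\mathcal{C} \times \{0\}$ and to $\G$ at $\mathcal{C} \times \{1\}$.

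Next I would invoke normalization \cite[Prop. 5.2]{LP} to replace $\fH$ by a normal pseudofunctor, so that $\bN\fH$ is defined. Since the Segal nerve preserves products (noted explicitly in the appendix), we get a map of simplicial objects in $Cat$
\begin{equation*}
\bN \fH : \bN \mathcal{C} \times \bN \mathbf{1} \;\cong\; \bN(\mathcal{C} \times \mathbf{1}) \longrightarrow \bN \mathcal{D}.
\end{equation*}
Passing to classifying spaces then yields
\begin{equation*}
|\bN \fH | : |\bN \mathcal{C}| \times |\bN \mathbf{1}| \longrightarrow |\bN \mathcal{D}|.
\end{equation*}
Since $\mathbf{1}$ is just the ordinal $[1]$ regarded as a locally discrete bicategory, $|\bN \mathbf{1}|$ is homotopy equivalent to the interval $\Delta^1$ (in fact to a point with two distinguished endpoints $0$ and $1$), and composing with the canonical inclusion $\Delta^1 \hookrightarrow |\bN \mathbf{1}|$ gives a homotopy $H : |\bN \mathcal{C}| \times \Delta^1 \to |\bN \mathcal{D}|$.

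The last step is to check the boundary conditions: restricting $\fH$ to $\mathcal{C} \times \{0\}$ recovers $\F$ and to $\mathcal{C} \times \{1\}$ recovers $\G$, and these restrictions stay normal after the normalization step, so on nerves we obtain $H|_0 = |\bN \F|$ and $H|_1 = |\bN \G|$. The only delicate point — and the one I expect to need most care — is verifying that the construction of $\fH$ really satisfies the pseudofunctor axioms (the associativity pentagon in $\mathcal{D}$ combined with the transformation coherence for $\eta$), and that the normalization procedure of \cite{LP} does not spoil the two endpoint identifications. Once those are in hand, the homotopy is immediate from functoriality of $|\bN(-)|$ and its compatibility with products.
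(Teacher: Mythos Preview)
Your proposal is correct and follows essentially the same route as the paper: package the transformation as a pseudofunctor $\fH:\mathcal{C}\times\mathbf{1}\to\mathcal{D}$ restricting to $\F$ and $\G$ at the endpoints (the paper cites \cite[Prop.~7.1]{CCG} for this rather than writing it out), normalize, and then use that $\bN$ preserves products and that $|\bN\mathbf{1}|$ contains a copy of $\Delta^1$ joining the two object-vertices. Your version is more explicit about the construction of $\fH$ and about the endpoint compatibility after normalization, but the strategy is identical.
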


\bibliography{ref}

\begin{thebibliography}{BDRR}

\bibitem[BDR]{BDR}
Nils~A. Baas, Bj{\o}rn~Ian Dundas, and John Rognes.
\newblock Two-vector bundles and forms of elliptic cohomology.
\newblock In {\em Topology, geometry and quantum field theory}, volume 308 of
  {\em London Math. Soc. Lecture Note Ser.}, pages 18--45. Cambridge Univ.
  Press, Cambridge, 2004.

\bibitem[BDRR]{BDRRst}
Nils~A. Baas, Bjorn~Ian Dundas, Birgit Richter, and John Rognes.
\newblock Stable bundles over rig categories.
\newblock Available as arXiv:0909.1742.

\bibitem[B{\'e}n]{benabou}
Jean B{\'e}nabou.
\newblock Introduction to bicategories.
\newblock In {\em Reports of the {M}idwest {C}ategory {S}eminar}, pages 1--77.
  Springer, Berlin, 1967.

\bibitem[CCG]{CCG}
P.~Carrasco, A.~M. Cegarra, and A.~R. Garzon.
\newblock Nerves and classifying spaces for bicategories, 2010.

\bibitem[EM]{EM}
A.~D. Elmendorf and M.~A. Mandell.
\newblock Rings, modules, and algebras in infinite loop space theory.
\newblock {\em Adv. Math.}, 205(1):163--228, 2006.

\bibitem[GPS]{GPS}
R.~Gordon, A.~J. Power, and Ross Street.
\newblock Coherence for tricategories.
\newblock {\em Mem. Amer. Math. Soc.}, 117(558):vi+81, 1995.

\bibitem[Gui]{guillou}
Bertrand Guillou.
\newblock Strictification of categories weakly enriched in symmetric monoidal
  categories.
\newblock Available as arXiv:0909.5270v1.

\bibitem[KS]{KS}
G.~M. Kelly and Ross Street.
\newblock Review of the elements of {$2$}-categories.
\newblock In {\em Category {S}eminar ({P}roc. {S}em., {S}ydney, 1972/1973)},
  pages 75--103. Lecture Notes in Math., Vol. 420. Springer, Berlin, 1974.

\bibitem[KV]{KV}
M.~M. Kapranov and V.~A. Voevodsky.
\newblock {$2$}-categories and {Z}amolodchikov tetrahedra equations.
\newblock In {\em Algebraic groups and their generalizations: quantum and
  infinite-dimensional methods ({U}niversity {P}ark, {PA}, 1991)}, volume~56 of
  {\em Proc. Sympos. Pure Math.}, pages 177--259. Amer. Math. Soc., Providence,
  RI, 1994.

\bibitem[Lap]{laplaza}
Miguel~L. Laplaza.
\newblock Coherence for distributivity.
\newblock In {\em Coherence in categories}, pages 29--65. Lecture Notes in
  Math., Vol. 281. Springer, Berlin, 1972.

\bibitem[Lei]{leinster}
Tom Leinster.
\newblock Basic bicategories.
\newblock Available as arXiv:math/9810017v1.

\bibitem[LP]{LP}
Stephen Lack and Simona Paoli.
\newblock 2-nerves for bicategories.
\newblock {\em $K$-Theory}, 38(2):153--175, 2008.

\bibitem[May1]{mayperm}
J.~P. May.
\newblock The spectra associated to permutative categories.
\newblock {\em Topology}, 17(3):225--228, 1978.

\bibitem[May2]{bipermnew}
J.~P. May.
\newblock The construction of {$E\sb \infty$} ring spaces from bipermutative
  categories.
\newblock In {\em New topological contexts for {G}alois theory and algebraic
  geometry ({BIRS} 2008)}, volume~16 of {\em Geom. Topol. Monogr.}, pages
  283--330. Geom. Topol. Publ., Coventry, 2009.

\bibitem[May3]{bipermold}
J.~Peter May.
\newblock {\em {$E\sb{\infty }$} ring spaces and {$E\sb{\infty }$} ring
  spectra}.
\newblock Lecture Notes in Mathematics, Vol. 577. Springer-Verlag, Berlin,
  1977.
\newblock With contributions by Frank Quinn, Nigel Ray, and J{\o}rgen
  Tornehave.

\bibitem[MLP]{MP}
Saunders Mac~Lane and Robert Par{\'e}.
\newblock Coherence for bicategories and indexed categories.
\newblock {\em J. Pure Appl. Algebra}, 37(1):59--80, 1985.

\bibitem[Seg]{segal}
Graeme Segal.
\newblock Categories and cohomology theories.
\newblock {\em Topology}, 13:293--312, 1974.

\bibitem[Shu]{shulman}
Michael Shulman.
\newblock Constructing symmetric monoidal bicategories.
\newblock Available as arXiv:1004.0993v1.

\bibitem[SP]{schommer}
Christopher Schommer-Pries.
\newblock The classification of two-dimensional extended topological field
  theories.
\newblock Available at {\tt
  http://sites.google.com/site/chrisschommerpriesmath/Home}.

\bibitem[SS]{SS}
Nobuo Shimada and Kazuhisa Shimakawa.
\newblock Delooping symmetric monoidal categories.
\newblock {\em Hiroshima Math. J.}, 9(3):627--645, 1979.

\bibitem[Str]{streetfib}
Ross Street.
\newblock Fibrations in bicategories.
\newblock {\em Cahiers Topologie G\'eom. Diff\'erentielle}, 21(2):111--160,
  1980.

\bibitem[Tho]{thomcat}
R.~W. Thomason.
\newblock Cat as a closed model category.
\newblock {\em Cahiers Topologie G\'eom. Diff\'erentielle}, 21(3):305--324,
  1980.

\end{thebibliography}
\bibliographystyle{alphanum}

\end{document}